\newtheorem{thm}{Theorem}[section]
\newtheorem*{thm*}{Theorem}
\newtheorem*{lem*}{Lemma}
\newtheorem{lem}[thm]{Lemma}
\newtheorem{cor}[thm]{Corollary}
\newtheorem{defn}[thm]{Definition} 
\newtheorem{conj}[thm]{Conjecture} 
\begin{document}

\title{The $3\times 3$ rooks graph ($K_3\square K_3$) is the unique smallest graph with lazy cop number 3}
\author{Brendan W. Sullivan\thanks{\href{mailto:sullivanb@emmanuel.edu}{\tt sullivanb@emmanuel.edu}; Dept. of Mathematics, Emmanuel College, 400 The Fenway, Boston, MA 02115}, Nikolas Townsend\thanks{\href{mailto:townsendn@emmanuel.edu}{\tt townsendn@emmanuel.edu}}, Mikayla Werzanski\thanks{\href{mailto:werzanskim@emmanuel.edu}{\tt werzanskim@emmanuel.edu}}} 
\date{\today}
\maketitle
\begin{abstract}
In the ordinary version of the pursuit-evasion game \textit{cops and robbers}, a team of cops and a robber occupy vertices of a graph and alternately move along the graph's edges, with perfect information about each other. If a cop lands on the robber, the cops win; if the robber can evade the cops indefinitely, he wins. In the variant \textit{lazy cops and robbers}, the cops may only choose one member of their squad to make a move when it's their turn. The minimum number of cops (respectively lazy cops) required to catch the robber is called the \textit{cop number} (resp. \textit{lazy cop number}) of $G$ and is denoted $c(G)$ (resp. $c_L(G)$). Previous work by Beveridge at al. has shown that the Petersen graph is the unique graph on ten vertices with $c(G)=3$, and all graphs on nine or fewer vertices have $c(G)\leq 2$. (This was a self-contained mathematical proof of a result found by computational search by Baird and Bonato.) In this article, we prove a similar result for lazy cops, namely that the $3\times 3$ rooks graph ($K_3\square K_3$) is the unique graph on nine vertices which requires three lazy cops, and a graph on eight or fewer vertices requires at most two lazy cops.
\end{abstract}

\section{Introduction}
Throughout, we work with finite, simple, undirected, connected graphs. The game \textit{cops and robbers} on such graphs was introduced by both Nowakowski \& Winkler \cite{nowakowski1983vertex} and Quilliot \cite{quilliot1978jeux} and has been studied extensively since then, leading to many deep conjectures and results as well as some interesting variations on the standard game. We highly recommend Bonato \& Nowakowski's book \textit{The Game of Cops and Robbers on Graphs} for an extensive survey of the current state of the field \cite{bonato2011game}.

\subsection{The standard game and known results}
Given a graph $G$ and some number $k$ of cops, the game plays as follows: 
\begin{itemize}
\item Each of the cops chooses a vertex on which to begin. 
\item In response, the robber chooses a vertex on which to begin. 
\item The cops' turn is first. Each cop may move along an edge of $G$ or choose to stay put. 
\item The robber's turn is next. He may move along an edge of $G$ or choose to stay put. 
\item The turns continue to alternate like this, with both sides having perfect information about the locations of all players.
\item The cops win if, at any point, a cop occupies the same vertex as the robber. 
\item Otherwise, the robber wins by indefinitely evading the cops.
\end{itemize}

For a given graph $G$, we seek its \textit{cop number}, denoted $c(G)$. This is the minimum number of cops required to guarantee the existence of a winning strategy whereby they catch the robber after finitely many moves. Such a number must exist because $c(G)\leq\gamma(G)$, the \textit{domination number} of $G$: the cops can win in one turn if they start on the vertices of a dominating set of $G$. 

Nowakowski \& Winkler first characterized the graphs which have $c(G)=1$, which they referred to as \textit{cop win} \cite{nowakowski1983vertex}. Shortly thereafter, Aigner \& Fromme proved two interesting results \cite{aigner1984game}. They showed that the class of planar graphs has bounded cop number: $c(G)\leq 3$ whenever $G$ is planar. By contrast, they showed that the class of all graphs has unbounded cop number: if the minimum degree satisfies $\delta(G)\geq k$ and $G$ has girth at least 5 (i.e. $G$ contains no 3- or 4-cycles), then $c(G)\geq k$. They followed this with an explicit construction of $k$-regular graphs containing no 3- or 4-cycles.

Consider the \textit{Petersen Graph} shown in Figure \ref{fig petersen}. Notice that it satisfies the hypotheses of Aigner \& Fromme's second result mentioned above: the graph is 3-regular and contains no 3- or 4-cycles. So, at least three cops are required to catch a robber on the Petersen Graph. Moreover, the domination number of the graph is three: indeed, the three vertices marked with boxes in the figure comprise a dominating set. Therefore, the cop number of the Petersen Graph is three.

\begin{figure}
\caption{The Petersen Graph is the unique smallest 3-cop win graph.}
\centering
\includegraphics[width=0.3\textwidth]{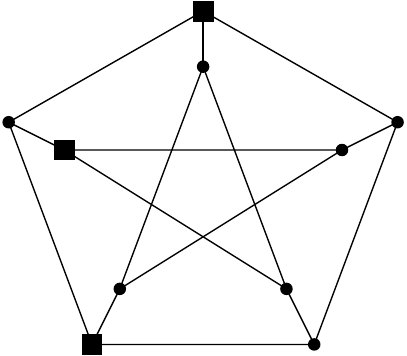}
\label{fig petersen}
\end{figure}

Recently, Baird et al. conducted an exhaustive computer search to find the cop numbers of graphs with few vertices. (An algorithm is contained in \cite{bonato2011game} which inputs a graph $G$ and a number of cops $k$ and returns {\sf True} if $c(G)\leq k$ and {\sf False} otherwise. That is, the algorithm can \textit{test} a graph's proposed cop number.) They concluded that the Petersen Graph is, in fact, the unique smallest graph with cop number three \cite{baird2013minimum}. That is, they found that (i) any graph on nine or fewer vertices has cop number at most two and (ii) amongst all graphs on ten vertices, only the Petersen Graph has cop number three while the rest have cop number one or two.

Shortly thereafter, Beveridge et al. posted an article that confirmed the aforementioned result via a self-contained mathematical proof \cite{beveridge2011petersen}. They approached the problem by proving a few facts relating the maximum degree of a graph to its cop number, including the fact that if a graph $G$ on $n$ vertices has maximum degree $\Delta(G)\geq n-5$, then $c(G)\leq 2$. (Compare this to our Lemma \ref{lem Delta n-4} below.) This helps to narrow the search to graphs on ten vertices. Thereafter, the proof that the Petersen Graph is unique required careful analysis of the strategies whereby the cops catch the robber. They prove a few lemmas that describe strategies for how two cops can catch a robber on a graph with ten vertices, and eventually deduce that the Petersen Graph is the only one not covered by those lemmas. They concluded with a conjecture that the $(k-5)$-cage solves this problem in general: that is, they conjecture that the $(k,5)$-cage is the unique smallest graph with cop number $k$. Of note, the Petersen Graph is the $(3,5)$-cage, and their conjecture is consistent with the outstanding Meyniel's Conjecture \cite{baird2013meyniel,beveridge2011petersen}.

\subsection{The lazy cops variant and known results}

Offner \& Ojakian investigated cops and robbers on the hypercubes $Q_n$ \cite{offner2014variations}. Specifically, they tweaked the rules so that some \textit{proportion} of the cops are allowed to move on each turn and then explored the tradeoff between that proportion and the number of cops required to win on $Q_n$. The extreme cases are where all cops are allowed to move (the ordinary version of the game) and where only one cop is allowed to move. Since then, that extreme case where only one cop is allowed to move has gained the name {\em lazy cops and robbers} in the literature \cite{bal2015lazy,bal2015lazy2,kinnersley2015cops}.

Analogously to $c(G)$, we define the \textit{lazy cop number} $c_L(G)$ to be the number of cops required to catch a robber on $G$ with the stipulation that only one cop is allowed to move on their turn. (The cops get to choose which of them moves on any given turn.) There is a simple and convenient relationship between $c$ and $c_L$, as stated in the following lemma.
\begin{lem}
\label{lem c cl gamma}
For any graph $G$,  $c(G)\leq c_L(G)\leq \gamma(G)$.
\end{lem}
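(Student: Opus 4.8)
The plan is to prove the two inequalities independently; each follows by a direct translation of strategies between the two versions of the game, so no subtle pursuit analysis is needed.

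For the upper bound $c_L(G)\leq\gamma(G)$, I would start the cops on the vertices of a minimum dominating set $D$, so that $|D|=\gamma(G)$. Once the robber commits to a starting vertex $v$, either $v\in D$, in which case a cop already occupies the robber's vertex and the cops have won, or else some $d\in D$ is adjacent to $v$ by the defining property of a dominating set. Since the cops move first, on turn one I would move only the single cop stationed at $d$ across the edge $dv$, landing on the robber. Exactly one cop moved, so this is a legal lazy-cop strategy, and therefore $\gamma(G)$ lazy cops always suffice.

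For the lower bound $c(G)\leq c_L(G)$, the key observation is that any legal sequence of moves for a team of lazy cops is also a legal sequence of moves for a team of ordinary cops of the same size: ``one designated cop moves and every other cop stays put'' is permitted in the standard game. Hence a winning strategy for $k=c_L(G)$ lazy cops is, read verbatim, a winning strategy for $k$ ordinary cops --- the robber's set of available responses is identical in the two games, so nothing the robber can do defeats it. This yields $c(G)\leq k=c_L(G)$. I would phrase this as: the collection of lazy-cop strategies embeds into the collection of ordinary-cop strategies, so relaxing the laziness restriction cannot increase the minimum number of cops required.

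There is essentially no obstacle here; the lemma is a bookkeeping statement recorded for later use. The only points that warrant an explicit sentence are that ``stay put'' is a legal move for a cop in both formulations (so that a lazy strategy which, on some turn, effectively leaves every cop in place is still legal for ordinary cops), and that the cops move before the robber on the opening turn (so that the domination argument catches the robber before he takes a step).
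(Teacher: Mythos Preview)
Your proof is correct and follows essentially the same approach as the paper: the paper notes that ordinary cops can adopt a lazy-cop strategy verbatim (by choosing not to move the other cops), giving $c(G)\leq c_L(G)$, and that lazy cops placed on a dominating set win in one move with a single cop moving, giving $c_L(G)\leq\gamma(G)$. Your write-up is simply a more detailed version of the same two observations.
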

The proof is simple: the first inequality follows because ordinary cops can adopt the strategy of lazy cops and simply choose not to make more moves, so they shouldn't need more than $c_L(G)$-many cops; the second inequality follows because placing lazy cops on the vertices of a dominating set allows them to win in one move, with only one cop needing to move to do so.

This variant of the original game has spurred some research into how $c$ and $c_L$ may differ for various classes of graphs. Bal et al. have studied lazy cops on hypercubes and on random graphs \cite{bal2015lazy,bal2015lazy2}. Kinnersley used lazy cops as part of a reduction to show that cops and robbers is EXPTIME-complete \cite{kinnersley2015cops}. The present authors of this article have investigated both ordinary and lazy cops who can move like standard chess pieces on boards of various sizes. Indeed, it was this work that led us to the result in this paper. Some of our other results have been submitted and some were presented at the Joint Mathematics Meetings in January 2016.

In general, there are plenty of open areas of research into this variation of the game and how it compares to the original. We list a few interesting questions: 
\begin{itemize}
\item What characterizes graphs $G$ for which $c(G)=c_L(G)$? For those graphs, can we say anything about \textit{how many more moves} it takes for the lazy cops to win? 
\item What characterizes graphs $G$ for which $c(G)<c_L(G)$? 
\item What characterizes graphs $G$ for which $c_L(G)=\gamma(G)$? 
\item Of all graphs $G$ with $n$ vertices, how many of them have $c(G)<c_L(G)$? 
\item Does Aigner \& Fromme's result about planar graphs carry over to the lazy cops variant? Or is there a planar $G$ such that $c_L(G)\geq 4?$ 
\end{itemize}

\section{Preliminaries}
\subsection{Definitions, notation}
Our main result concerns the particular graph $R_3=K_3\square K_3$, so we first define this graph. The notation ``$\square$'' is chosen because it is a visualization of the Cartesian product of $P_2$ (a path of length 2, i.e. an edge) with itself. 
\begin{defn}
Given graphs $G,H$, their \textit{Cartesian product}, denoted $G\square H$, is the graph with vertex set $V(G)\times V(H)$ and with an edge $(u,v)\sim (x,y)$ if and only if (i) $u=x$ in $G$ and $v\sim y$ in $H$, or (ii) $u\sim x$ in $G$ and $v=y$ in $H$.
\end{defn} 

We may think of this in terms of the cops and robber game when played on such a graph $G\square H$ as follows. Consider a \textit{position} (i.e. a vertex in the graph) as a point with two coordinates, one being a vertex of $G$ and one being a vertex of $H$. Consider a \textit{legal move} in the graph to consist of making either a move along an edge of $G$ or a move along an edge of $H$ but not both simultaneously. Compare this to the \textit{strong product}, denoted $G\boxtimes H$, where a legal move can be made along an edge of both $G$ and $H$ simultaneously. For more information about graph products, see e.g. \cite{imrich2000product,hammack2011handbook}.

\begin{defn}
We use $R_n$ to mean the \textit{$n\times n$ Rooks graph}. This is a graph with $n^2$ vertices arranged in $n$ rows and $n$ columns each of size $n$ such that every vertex is adjacent to each other vertex in its row and in its column. That is, the vertices of this graph are the squares of a standard $n\times n$ chessboard, and the edges represent the legal moves allowed by a Rook in standard chess. 
\end{defn}
Notice that, in fact, $R_n = K_n\square K_n$. The graph has $n^2$ vertices and an edge $(u,v)\sim (x,y)$ is present if and only if (i) $u=x$ and $v\neq y$, or (ii) $u\neq x$ and $v=y$. This corresponds exactly to the legal moves of a Rook in standard chess, thinking of the first coordinate as the Rook's row and the second coordinate as its column. 

Because of this correspondence, we sometimes find it convenient to refer to ``the board'' and ``rows/columns'' and other such terminology from chess, as opposed to graph theoretic terms. In particular, this makes the proofs in the following subsection easier to follow, we believe.

\subsection{Outline of results and techniques} 
In general, we find that $c_L(R_n)=\gamma(R_n)=n$ and yet $c(R_n)=2$ for all $n\geq 2$. Thus, we have a class of graphs for which the ordinary cop number is bounded yet the lazy cop number grows without bound. 
\begin{thm}
\label{thm Rn}
The $n\times n$ Rooks graph $G=K_n\square K_n$ has $c_L(G)=\gamma(G)=n$ and $c(G)=2$ (for $n\geq 2$).
\end{thm}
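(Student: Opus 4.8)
The plan is to prove the three equalities separately, in the order $\gamma(R_n)=n$, then $c_L(R_n)=n$, then $c(R_n)=2$, using the chessboard language throughout. The diagonal $\{(i,i):1\le i\le n\}$ is a dominating set, since each cell $(a,b)$ shares its row with $(a,a)$, so $\gamma(R_n)\le n$. For the reverse inequality, a set $D$ with $|D|\le n-1$ occupies at most $n-1$ rows and at most $n-1$ columns, so some row $r$ and some column $c$ contain no member of $D$; the cell $(r,c)$ then lies in no row or column of any member of $D$ and is not itself in $D$, hence is undominated. Thus $\gamma(R_n)=n$.

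For the lazy cop number, $c_L(R_n)\le n$ follows from Lemma~\ref{lem c cl gamma} together with $\gamma(R_n)=n$, so the work is to show $n-1$ lazy cops cannot win. Call a cell \emph{free} if no cop lies in its row or in its column; since $n-1$ cops occupy fewer than $n$ rows and fewer than $n$ columns, a free cell always exists. I will have the robber start on a free cell and maintain the invariant: \emph{at the start of each robber turn, at most one cop lies in the robber's row or column.} This holds just after the robber's initial placement, and is preserved from one robber turn to the next because exactly one cop moves in between and the robber's cell was free immediately after his previous move. Given the invariant, the robber can re-establish it: if one cop lies in his row or column, it sits in exactly one of the two lines (not on his cell, since he has not been caught), so the other line is cop-free; if no cop does, both lines are cop-free. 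Either way some line through his cell is cop-free, and moving along it to a cell whose other coordinate is a cop-free column or row --- one exists, as at most $n-1$ columns and $n-1$ rows are occupied --- lands him on a new free cell. Since a cop captures only by moving onto the robber's cell, which the invariant forbids, the robber survives forever, so $c_L(R_n)=n$.

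For the ordinary cop number, the lower bound $c(R_n)\ge 2$ comes from checking that $R_n$ has no corner when $n\ge 2$: if $u=(p,q)$ had $N[u]\subseteq N[v]$ with $v\ne u$, then for $n\ge 3$ the neighbors $(p,q')$ with $q'\ne q$ force $v$ into row $p$, while the neighbors $(p',q)$ with $p'\ne p$ force $v$ into column $q$, giving $v=u$; the only remaining case is $R_2=C_4$, which has no corner by inspection. By the Nowakowski--Winkler characterization of cop-win graphs, $c(R_n)\ge 2$. For $c(R_n)\le 2$: on the cops' first turn Cop~A moves into the robber's current row and Cop~B into his current column, each being one move since a cop may move anywhere within its own column or row. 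Thereafter the robber's cell is always $(r',c')$ with $r'=r$ or $c'=c$, where $(r,c)$ was his cell before his move (his move changes at most one coordinate); in the first case Cop~A, still lying in row $r'$, moves along that row onto $(r',c')$, and otherwise $c'=c$ and Cop~B, still lying in column $c'$, does the symmetric thing. The robber is caught on the following cops' turn, so $c(R_n)=2$.

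The main obstacle is the lazy-cops lower bound: one must choose the robber's invariant to be simultaneously robust under an arbitrary single cop move and strong enough to guarantee an escape route along a cop-free row or column. The domination and ordinary-cop-number arguments are comparatively routine.
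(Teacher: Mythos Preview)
Your proof is correct and follows essentially the same approach as the paper's. The paper argues the three facts in the same way: it cites the Nowakowski--Winkler characterization for $c(R_n)\ge 2$ (where you explicitly check the absence of a corner), describes the two-cop win by sending one cop into the robber's row and the other into his column, asserts $\gamma(R_n)=n$ as clear, and gives the identical pigeonhole robber strategy against $n-1$ lazy cops (start on a cell in an unoccupied row and column; when threatened along one line, escape along the perpendicular cop-free line to a row/column not used by the remaining $n-2$ cops). Your invariant formulation and explicit corner argument add welcome detail but do not change the underlying ideas.
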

\begin{proof}
We know that $c(G)>1$ because of the characterization of cop-win graphs in \cite{nowakowski1983vertex}. We also see that two cops can win as follows: place them on opposite corners of the board and let the robber start anywhere he wishes. On the cops' first turn, send one of them to occupy the same row as the robber and send the other to occupy the same column as the robber. The robber is now trapped. 

That $\gamma(G)=n$ is clear. (For a more general proof that the domination number of an $m\times n$ board is $\min\{m,n\}$, see \cite{yaglom1987challenging}.) We now show that the robber can evade $n-1$ lazy cops, thus necessitating $c_L(G)=n$. No matter where these $n-1$ cops begin, the Pigeonhole Principle guarantees some row and some column that contain no cops; let the robber start at the square common to that row and column. Thereafter, the robber may choose to pass his turn if he is not under threat. If he is under threat, then \textit{only one cop is threatening} since only one may move at a time. Then, the robber looks in the direction perpendicular to this threat. There are $n-2$ other cops out there and $n-1$ squares available, so again we are guaranteed to find a safe square. The robber can use this strategy indefinitely. 
\end{proof}

The previous proof makes use of an important observation about the lazy cops variant: if the robber is ever under threat, then \textit{only one cop is threatening}. Otherwise, if the robber is under threat by two cops, this means one just moved to create a threat when there had already been one, in which case that threatening cop surely should have just caught the robber instead! We will not have occasion to use this idea in the remainder of this article, but we point it out here as a useful observation. (Indeed, we have used it fruitfully in regards to other results that have been submitted.)

We are now ready to state our main result: 
\begin{thm}
\label{thm R3}
The $3\times 3$ Rooks graph $G=K_3\square K_3$ is the unique graph on 9 vertices with $c_L(G)=3$. All other graphs $H$ on 9 vertices have $c_L(H)\leq 2$.
\end{thm}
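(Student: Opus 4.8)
The plan is to adapt the strategy of Beveridge et al.\ for ordinary cops to the lazy setting, exploiting the fact that a \emph{parked} lazy cop is essentially free: while one cop sits still, the other cop moves on every turn and behaves exactly like a single cop in the ordinary game. Two cheap reductions come first. By Lemma~\ref{lem Delta n-4} we may assume $\Delta(G)\le 4$, and by Lemma~\ref{lem c cl gamma} we may assume $\gamma(G)\ge 3$, since otherwise $c_L(G)\le\gamma(G)\le 2$. So it remains to show that the only connected graph on $9$ vertices with $\Delta(G)\le 4$ and $\gamma(G)\ge 3$ for which two lazy cops do not suffice is $R_3$.

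The main tool will be a \emph{parking lemma}: if $G$ has a vertex $v$ for which every connected component of $G-N[v]$ is cop-win, then $c_L(G)\le 2$. The strategy is to park one cop permanently at $v$. The robber can never enter $N[v]$: moving onto $v$ is suicide, and moving onto a neighbor of $v$ loses on the cops' very next turn. Hence the robber is confined forever to a single component $C$ of $G-N[v]$, and since $N[v]$ is connected the roaming second cop can walk into $C$ and then play a single-cop winning strategy on $G[C]$ without ever leaving it (if the robber happens to begin in another component, the cop clears his way there first; the robber cannot follow). In particular this recovers the maximum-degree bound, and, more usefully, it rigidly constrains the surviving graphs: at a vertex $v$ of degree $4$ the four-vertex graph $G-N[v]$ must be non-cop-win and hence a $4$-cycle, and at a vertex $v$ of degree $3$ the five-vertex graph $G-N[v]$ must contain a non-cop-win component, namely a $C_4$, a $C_5$, a $K_{2,3}$, or a $C_4$ with a pendant edge.

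The proof then divides on $\Delta(G)$. If $\Delta(G)\le 3$, then $G$ is subcubic with at most $13$ edges; here I would show directly, using the parking lemma at a well-chosen vertex (or a lazy-cop path-guarding argument in the spirit of Aigner--Fromme, also free while the other cop is parked), that two lazy cops always suffice, ruling out the finitely many local configurations left by the constraint above. If $\Delta(G)=4$, first an argument that a graph mixing degree-$4$ and lower-degree vertices cannot meet all the constraints reduces us to the $4$-regular case; there a short edge count shows that around each vertex $v$ the set $N(v)$ spans exactly two edges and sends exactly eight edges to $S:=V\setminus N[v]$ with $G[S]\cong C_4$, and propagating this structure (or comparing against the list of connected $4$-regular graphs on $9$ vertices) forces $G\cong R_3$.

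The main obstacle is this last paragraph: showing that the purely local requirement $G-N[v]\cong C_4$ at every degree-$4$ vertex, in the presence of $\Delta(G)\le 4$ and $\gamma(G)\ge 3$, actually singles out $R_3$ among \emph{all} $9$-vertex graphs and not merely among $4$-regular ones, and verifying that in each leftover case the explicit two-cop strategy genuinely wins --- in particular that the robber can never be forced out of, nor voluntarily escape, the region into which the cops are herding him. The bookkeeping here is finite but delicate, and this is where most of the work of the paper will lie.
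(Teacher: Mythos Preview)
Your plan is essentially the paper's: split on $\Delta$, park one cop at a chosen vertex $v$, and analyze the induced subgraph $G-N[v]$. Your ``parking lemma'' is precisely the device the paper invokes over and over (stationing a cop at $u$ and chasing the robber inside $G-\overline N(u)$), just stated once and for all; and your observation that $G-N[v]$ must then fail to be cop-win --- forcing a $C_4$ when $\deg(v)=4$ --- is exactly how the paper opens the $\Delta=4$ case. The added reduction to $\gamma(G)\ge 3$ is correct but, as it turns out, does not shorten anything.

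Two points to fix. First, your list of connected non-cop-win graphs on five vertices is missing the house (a $C_5$ with one chord, equivalently $C_4$ with a new vertex joined to two adjacent vertices): its only corner, once removed, leaves $C_4$. Second, in the $\Delta=4$ case the paper does \emph{not} first reduce to $4$-regular. Instead it argues directly, by a two-cop play, that each vertex of the residual $C_4$ must have exactly two neighbors in $N(u)$, and then proves three short claims (opposite vertices of the $C_4$ have disjoint external neighborhoods; two vertices of $N(u)$ sharing a neighbor in the $C_4$ cannot be adjacent; $N(u)$ must span at least two, hence exactly two disjoint, edges) that leave only $R_3$ and one sporadic $c_L=2$ graph. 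Your proposed route --- reduce to $4$-regular and then appeal to an edge count or the list of sixteen $4$-regular graphs on nine vertices --- is viable, but the reduction step itself is not free: showing that a single vertex of degree $\le 3$ already lets two lazy cops win requires the same kind of explicit pursuit argument you were hoping to bypass, so you will end up doing comparable work either way. For $\Delta\le 3$ the paper, like you, finds no unifying trick and resorts to a case split on how two chosen degree-$3$ vertices overlap; your parking-lemma constraint at every vertex is a reasonable organizing principle for that casework, but be prepared for it to be just as long.
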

We will prove this in the remaining sections of this paper by considering the maximum degree $\Delta$ of a graph. We start in Section \ref{sec large degree} by considering graphs on 8 or fewer vertices, as well as graphs on 9 vertices with $\Delta\geq 5$. In both cases, we find $c_L\leq 2$. We then show that 9 vertices and $\Delta=4$ makes $c_L\leq2$, with one notable exception: $R_3$ is 4-regular and has $c_L=3$.  We continue by considering graphs with $\Delta \leq 3$. This proved to be more challenging than we imagined, so we tackle this in two parts: Section \ref{sec 10 edges} concerns graphs with 9 vertices and at most 10 edges; Section \ref{sec cases} concerns graphs with 9 vertices and 11 to 13 edges. This restriction on the number of edges comes from $\Delta\leq 3$, as well as the following result about the minimum degree of a graph.

\begin{lem}[$\delta\geq 2$ Suffices]
\label{lem delta 2}
Assume $G=(V,E)$ has a vertex $v\in V$ with $\deg(v)=1$; say $uv\in E$ is the unique edge incident to $v$. Define $G'$ to be the graph with vertex set $V'=V-\{v\}$ and edge set $E'=E-\{uv\}$. Then $c_L(G')=c_L(G)$.
\end{lem}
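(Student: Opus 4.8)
The plan is to show that a pendant vertex is irrelevant to the lazy cop number by establishing both inequalities $c_L(G') \le c_L(G)$ and $c_L(G') \ge c_L(G)$ via strategy-transfer arguments. The intuition is that a degree-one vertex $v$ is a strictly worse place for the robber to be than its unique neighbor $u$ (from $v$ the robber can only move to $u$ or stay, whereas from $u$ he has all of $v$'s escape options and more), and a strictly worse place for a cop to be than $u$ as well (a cop on $v$ dominates only $\{u,v\}$, while a cop on $u$ dominates that and more). So removing $v$ should not change anything.

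First I would prove $c_L(G') \le c_L(G)$. Suppose $k = c_L(G)$ cops win on $G$. I claim $k$ cops win on $G'$ by simulating the $G$-strategy while pretending the robber might use $v$. Concretely, run the winning $G$-strategy; whenever the simulated strategy would send a cop to $v$, send that cop to $u$ instead (a legal move in $G'$ since the only way to reach $v$ is from $u$, or the cop is already at $v$—but that case won't arise if we maintain the invariant that no cop ever sits on $v$). One must check that this ``projected'' position is at least as good for the cops: this uses a retract/corner-type argument, namely that the map $\pi\colon V \to V'$ fixing everything and sending $v \mapsto u$ is a graph homomorphism $G \to G'$ that is the identity on $G'$ (a retraction), and it is standard that cop-type games are monotone under retractions. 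The robber on $G'$ can be lifted to a robber on $G$ (his moves are legal in $G$ since $E' \subseteq E$), the cops beat that lifted robber on $G$, and projecting the capturing position back down captures the real robber on $G'$.

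Next I would prove $c_L(G') \ge c_L(G)$, i.e. if $k = c_L(G')$ cops win on $G'$ then $k$ cops win on $G$. Place all cops on $G$ according to the $G'$-strategy (never on $v$). If the robber is on a vertex of $V'$, play the $G'$-strategy directly; his only ``extra'' option in $G$ is to move to $v$, and from $v$ his only options are to return to $u$ or pass, so the cops treat a robber on $v$ exactly as a robber on $u$: continue the $G'$-strategy as though the robber were at $u$ (this is consistent because $v \sim u$ only, so whatever threats/captures the strategy aims at $u$ also apply to $v$ via the edge $uv$, and if the strategy ever moves a cop onto $u$ while the robber is at $v$, the robber is trapped—next turn that cop moves to $v$). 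The key point is that the robber visiting $v$ can only waste time, so the $G'$-winning strategy still terminates.

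The main obstacle I expect is bookkeeping rather than conceptual: making the simulation rigorous requires carefully handling the turn structure of the \emph{lazy} game—since only one cop moves per turn, I must verify that the substitution ``go to $u$ instead of $v$'' never forces the cops to waste a move they couldn't afford, and that the invariant ``no cop ever occupies $v$'' can genuinely be maintained throughout (in particular, the initial placement of the $G$-strategy might put a cop on $v$, which we replace by $u$ at the outset). I would package the homomorphism/retraction observation as the clean way to avoid re-deriving monotonicity from scratch, citing that the lazy cops and robbers game, like the ordinary one, is preserved under retracts—or, if the excerpt's self-contained style demands it, giving the two-paragraph direct strategy-stealing argument sketched above. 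Either way the proof is short.
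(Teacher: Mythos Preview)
Your proposal is correct and shares the paper's starting observation: the map $\pi$ fixing $V'$ and sending $v\mapsto u$ is a retraction $G\to G'$, which immediately yields $c_L(G')\le c_L(G)$. The difference lies in the reverse inequality. The paper invokes a Berarducci--Intrigila retract bound (adapted to lazy cops) to obtain $c_L(G)\le\max\{c_L(G'),2\}$, which finishes the argument when $c_L(G')\ge 2$ but forces a separate treatment of the case $c_L(G')=1$ via dismantlability (noting $v$ is a pitfall dominated by $u$). You instead give a direct shadow-strategy argument: play the winning $G'$-strategy against the projected robber $\pi(R)$, and observe that when the shadow is captured at $u$ while the real robber sits at $v$, one additional cop move $u\to v$ finishes the job. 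Your route is more elementary and self-contained---no external citation, no case split on whether $c_L(G')=1$---while the paper's route illustrates how the result falls out of general retract machinery already in the literature. Both are short; yours is arguably the cleaner exposition for a self-contained paper, and the one potential gap you flag (verifying that ``go to $u$ instead of $v$'' never costs the lazy cops a wasted turn) is genuinely a non-issue, since a cop at $u$ dominates everything a cop at $v$ would, so the projected cop position is always at least as good.
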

\begin{proof}
Notice that $G'$ is, in fact, a \textit{retract} of $G$ since the map $\varphi:V\to V'$ defined by $\varphi(x)=x$ for all $x\in V'$ and $\varphi(v)=u$ preserves adjacencies in $G$. Berarducci \& Intrigila proved several helpful results about retracts, and we note that all of their arguments carry over from ordinary cops to lazy cops \cite{berarducci1993cop}. (For instance, they show that $c(H)\leq c(G)$ when $H$ is a retract of $G$ by arguing that the cops may catch the robber who plays on $H$ by acting as if he moves in $G$ and placing themselves according to the retraction map onto $H$.) 

With their results thus adapted to lazy cops, we may deduce that
\[
c_L(G') \leq c_L(G) \leq \max\{c_L(G'),2\}
\]
since $G-G'$ is the lone vertex $v$ which has cop number 1.  Thus, we find that $c_L(G')=c_L(G)$ provided $c_L(G')\geq 2$ so that it witnesses the maximum on the right-hand side. Otherwise, $c_L(G')=1$ which means it is \textit{dismantlable} in the sense given by Nowakowski \& Winkler \cite{nowakowski1983vertex}. Notice that $v$ is a \textit{pitfall} in $G$ since its closed neighborhood is dominated by $u$. So, we may begin to dismantle $G$ by removing $v$. But this yields precisely $G'$ which, as assumed, can be dismantled completely. Thus, if $c_L(G')=1$, then so is $c_L(G)=1$. In either case, we have $c_L(G')=c_L(G)$. 
\end{proof}
By virtue of this lemma, we may ignore graphs that have a vertex of degree 1. By removing that vertex, we obtain a graph with the same lazy cop number that is \textit{smaller}. So, when considering graphs on 9 vertices and already knowing that $c_L\leq 2$ for graphs on 8 vertices, we can specify that $\delta \geq 2$.

\section{Graphs with ``large'' maximum degree}
\label{sec large degree}
\subsection{$|V|\leq 8 \implies c_L\leq 2$}
\begin{thm}
\label{thm 8 vertices}
If $G$ is a connected graph on at most 8 vertices, then $c_L(G)\leq 2$.
\end{thm}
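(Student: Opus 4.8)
The plan is to reduce to a small core and then run a single two-cop ``park-and-clean'' strategy. First, by applying Lemma~\ref{lem delta 2} repeatedly to delete vertices of degree $1$, I may assume $\delta(G)\ge 2$ (if this exhausts $G$ we are left with $K_1$ or $K_2$, where $c_L=1$); deleting vertices does not raise the vertex count, so still $|V(G)|\le 8$. Next, if $\gamma(G)\le 2$ then $c_L(G)\le\gamma(G)\le 2$ by Lemma~\ref{lem c cl gamma} and we are done, so I may also assume $\gamma(G)\ge 3$.

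The engine of the argument is an observation I would isolate as a lemma: \emph{if $G$ has a vertex $v$ such that every connected component of $G-N[v]$ is cop-win (dismantlable), then $c_L(G)\le 2$.} The strategy: station $C_1$ permanently on $v$, passing every turn. Then the robber can never end a turn on a vertex of $N[v]$ (else $C_1$ captures him immediately), so he is confined to a single component $K$ of $G-N[v]$, and his available moves coincide with those of the game played on $K$ alone. Meanwhile $C_2$ walks a shortest path to $K$ (legal in the lazy model, since $C_1$ is idle) and then runs a one-cop winning strategy on $K$, which catches the robber in finitely many moves because $K$ is cop-win.

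Now I apply this. If $\Delta(G)\ge |V(G)|-4$ — in particular whenever $|V(G)|\le 7$ and $\Delta(G)\ge 3$, and whenever $|V(G)|=8$ and $\Delta(G)\ge 4$ — pick $v$ of maximum degree; then $|V(G)\setminus N[v]|\le 3$, so every component of $G-N[v]$ has at most three vertices, hence is cop-win, and the lemma gives $c_L(G)\le 2$. If instead $\Delta(G)\le 2$ then, since $\delta(G)\ge 2$, $G$ is a cycle $C_n$, and the parking strategy again applies because $C_n-N[v]$ is a path; so $c_L(C_n)\le 2$. The only case that survives is $|V(G)|=8$ with $\Delta(G)=3$ (and $\delta(G)\ge 2$, $\gamma(G)\ge 3$).

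For such a graph, $G-N[v]$ has four vertices when $\deg(v)=3$ and five when $\deg(v)=2$. Since $C_4$ is the unique graph on at most four vertices with a non-cop-win component, the lemma already applies unless $G-N[u]\cong C_4$ for \emph{every} degree-$3$ vertex $u$ (and no degree-$2$ vertex does better). Ruling this out is the crux. I would argue structurally: the hypothesis $G-N[v]\cong C_4$, together with $\Delta(G)=3$ and connectedness, tightly constrains how $N[v]$ attaches to the $C_4$ and how the three vertices of $N(v)$ are joined to each other, leaving only a handful of candidate graphs; for each one exhibits a \emph{different} vertex $w$ for which $G-N[w]$ is dismantlable — in practice a triangle, a path, or a disjoint union of such together with isolated vertices — and park-and-clean then finishes. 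Equivalently, this last step amounts to enumerating the finitely many connected graphs on $8$ vertices with $2\le\delta\le\Delta\le 3$ and $\gamma\ge 3$ and checking the lemma's hypothesis for each. This finite, somewhat tedious verification is the one place the proof cannot be made slick, and it is the main obstacle.
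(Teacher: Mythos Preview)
Your overall structure mirrors the paper's: reduce via Lemma~\ref{lem delta 2} to $\delta\ge 2$, handle large $\Delta$ by parking a cop at a max-degree vertex and cleaning up what remains, and isolate the case $|V|=8$, $\Delta=3$ with $H:=G-N[u]$ a 4-cycle as the only hard one. Your ``park-and-clean'' lemma is exactly the idea underlying Lemmas~\ref{lem Delta n-4} and~\ref{lem v8}, and it correctly disposes of every $H$ that is dismantlable.

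The gap is in the $C_4$ case. You propose to resolve it by finding a \emph{different} vertex $w$ with $G-N[w]$ dismantlable, and you concede this would require a finite enumeration you do not actually carry out. The paper avoids this entirely with a short, uniform argument that you overlook. Since $G$ is connected, some vertex of the 4-cycle, say $a$, has a neighbor $a'\in N(u)$; and since $\Delta=3$ and $a$ already has two neighbors on the cycle, $a'$ is $a$'s \emph{only} neighbor outside $H$. Now start the second cop at $c$, the vertex opposite $a$ on the cycle (this forces the robber to start at $a$), and then move the \emph{first} cop from $u$ to $a'$. The robber is trapped: his three neighbors $b,d,a'$ are either occupied by $C_1$ or guarded by $C_2$ at $c$.

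The idea you are missing is that the parked cop need not stay parked forever---one well-chosen step by $C_1$ closes the trap. Your insistence that $C_1$ pass on every turn is precisely what turns the $C_4$ case into an apparent obstacle demanding enumeration, when in fact it falls to a single additional move.
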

In this section, we prove \textbf{Theorem \ref{thm 8 vertices}}. We do this by taking $G=(V,E)$ with $|V|\leq 8$ and considering a vertex $u$ such that $\deg(u)=\Delta$, the \textit{maximum degree} of $G$. If $\Delta\geq 4$, then two lazy cops can win easily since one cop can dominate most of the graph single-handedly. If $\Delta =3$, then two lazy cops also win but we have one interesting case to consider. And if $\Delta=2$, then $G$ is a cycle and we know $c_L=c=2$. The following general result will be useful and is of interest on its own. (It is also an analogue to Corollary 1.4 of Beveridge et al.'s paper \cite{beveridge2011petersen} which states that $\Delta\geq n-5$ implies $c(G)\leq 2$.)
\begin{lem}
\label{lem Delta n-4}
For a graph $G$ on $n$ vertices with $\Delta\geq n-4$, we have $c_L(G)\leq 2$.
\end{lem}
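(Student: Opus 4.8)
The plan is to fix a vertex $u$ with $\deg(u)=\Delta\geq n-4$ and place one cop permanently (or semi-permanently) at $u$, thereby ``covering'' the closed neighborhood $N[u]$, and then argue that the remaining at most $3$ vertices outside $N[u]$ can be handled by a single second cop. First I would note that $|V\setminus N[u]|\le n-1-(n-4)=3$, so the robber, if he is to survive, must spend his time on the small set $W:=V\setminus N[u]$ together with whatever vertices of $N(u)$ are adjacent to $W$; but the moment the robber steps into $N[u]$ he is adjacent to the cop at $u$ and is caught on the next cop move. So effectively the robber is confined to $W$, a set of at most three vertices, and may only transit through $N(u)$, never resting there.

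Next I would use the second cop to patrol $W$. The key point is that with the first cop frozen at $u$, the game restricted to the robber's safe territory is essentially a game on an induced subgraph with at most three ``safe'' vertices, and a single cop catches a robber on any graph on three or fewer vertices (indeed any such connected graph is cop-win — a path or triangle). More carefully: I would bring the second cop to a vertex that dominates $W$ within the relevant subgraph. Since $|W|\le 3$, either $W$ induces a subgraph with a dominating vertex, or $W$ is an independent/sparse set each of whose vertices has a neighbor in $N(u)$; in the latter case the cop at $u$ already threatens any robber forced to pass through $N(u)$, so the robber cannot move between components of $W$ without being caught, and the second cop simply walks onto whichever vertex of $W$ the robber is stuck on. I would organize this as a short case analysis on $|W|\in\{0,1,2,3\}$ and, within $|W|=3$, on the induced subgraph $G[W]$ (at most a handful of possibilities: empty, one edge, path, triangle), checking in each case that the second cop can either step directly onto the robber or force him into $N[u]$.

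The step I expect to be the main obstacle is handling the case $|W|=3$ where $G[W]$ has few or no edges but each vertex of $W$ has several neighbors in $N(u)$: here the robber has genuine mobility, shuttling through $N(u)$ between the three outer vertices, and I must be careful that a single moving cop can corner him. The resolution is that any such ``shuttle'' move by the robber places him momentarily in $N(u)$, where the cop at $u$ is adjacent to him; since only one cop moves per turn, the robber would be caught before completing the transit — so in fact the robber may never leave his current vertex of $W$ once the cop is at $u$, and the second cop just walks over to catch him. I would also need the minor observation (cf. Lemma \ref{lem delta 2}) that we may assume $\delta(G)\ge 2$, or else simply note the argument does not require it. Finally, I would remark that bringing the first cop to $u$ and the second cop to its target takes only finitely many moves and the robber cannot exploit the transient period, since at worst he sits in $W$ and waits.
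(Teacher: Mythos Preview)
Your approach is essentially the same as the paper's: station one cop at a vertex $u$ of degree $\Delta\ge n-4$, observe that the robber is then confined to the at most three vertices of $H:=G[V\setminus \overline{N}(u)]$, and finish with a short case analysis on $|H|\in\{0,1,2,3\}$ and the structure of $G[H]$, using the second cop to walk through $G$ to the robber. One clarification: your talk of the robber ``transiting'' or ``shuttling'' through $N(u)$ is a non-issue from the start, since moves are single edges and any robber move into $N(u)$ ends his turn there (you eventually say this yourself); once that is clear, the case $|H|=3$ requires no special care beyond what the paper does---place the second cop on a vertex of maximum degree in $H$ and chase.
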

\begin{proof}
Let $u\in V$ have $\deg(u)=\Delta\geq n-4$. Consider the induced subgraph external to $u$ and its neighbors, denoted $H:=G[V-\overline{N}(u)]$. We have two lazy cops at our disposal, and we will choose to initially place one at $u$ and keep him there, as this restricts the robber to playing on $H$ throughout the game.
\begin{itemize} 
\item If $H$ is empty then, in fact, $c_L(G)=1$.
\item If $H$ has one vertex, we may start the second cop there and the robber is certainly caught within one move. 
\item If $H$ has two vertices, say $a$ and $b$, we may start the second cop on, say, $a$. If $ab$ is an edge, the robber is caught. Otherwise, the robber was forced to start at $b$ and he cannot safely move. This second cop who started at $a$ may travel through the graph to reach $b$ (since $G$ is connected). 
\item If $H$ has three vertices, say $a,b,c$, we choose to start the second cop at the one that has the maximum degree within $H$, say $a$. (By \textit{degree within $H$} we mean to only consider adjacencies amongst $\{a,b,c\}$.)
\begin{itemize}
\item If $\deg_H(a)=2$, then the robber is caught immediately. 
\item If $\deg_H(a)=1$ and $ab$ is that edge, then the robber is forced to start at $c$ and he cannot safely move. Analogously to the previous case, this second cop can chase him down since $G$ is connected. 
\item If $\deg_H(a)=0$, then the robber is forced to start at $b$ or $c$ and cannot safely move. This second cop chases him down since $G$ is connected.
\end{itemize}
\end{itemize} 
Thus, two (or one) lazy cops win when $\Delta\geq n-4$.
\end{proof} 

We now know that when $|V|\leq 8$ and $\Delta\geq 4$, we have $c_L\leq 2$. As mentioned above, if $\Delta=2$ then $G$ is a cycle (since we also assume $\delta\geq 2$, by Lemma \ref{lem delta 2}), which has $c_L=2$. So, the only case remaining is when $\Delta=3$. 

\begin{lem}
\label{lem v8}
For a graph on $n\leq 8$ vertices with $\Delta=3$, $c_L\leq 2$.
\end{lem}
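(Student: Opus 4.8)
The plan is to station one lazy cop at a well-chosen vertex $u$ of degree $3$, which confines the robber to the induced subgraph $H:=G[V\setminus\overline{N}(u)]$, and then to run down the confined robber with the second cop. To begin, recall that we may assume $\delta(G)\ge 2$ (Lemma~\ref{lem delta 2}); and if $n\le 7$ then $\Delta=3\ge n-4$, so Lemma~\ref{lem Delta n-4} already gives $c_L(G)\le 2$. Hence the only outstanding case is $n=8$ with $\Delta=3$, which I treat from now on. In that case $|\overline{N}(u)|=4$ for every vertex $u$ of degree $3$, so $H$ has exactly $4$ vertices.

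The heart of the matter is that two lazy cops win the moment we can choose a vertex $u$ with $\deg(u)=3$ and $H\not\cong C_4$. Indeed, exactly as in the proof of Lemma~\ref{lem Delta n-4}, leaving one cop at $u$ traps the robber on $H$ for good: no vertex of $H$ is adjacent to $u$, and a step into $N(u)$ is caught immediately. Now $H$ is a graph on four vertices, and the only connected graph on at most four vertices that fails to be cop-win---that is, fails to be dismantlable in the sense of \cite{nowakowski1983vertex}---is $C_4$, while every disconnected graph on at most four vertices has all components of order at most $3$ and hence cop-win. So the component $H'$ of $H$ that contains the robber is cop-win, and a single cop roaming the connected graph $G$ can catch a robber confined to $H'$: he first follows a shortest $G$-path to a vertex $v_0\in H'$ from which he possesses a winning one-cop strategy on $H'$ (catching the robber outright if the walk ever reaches him), and then plays that strategy, every move of which stays inside $H'\subseteq G$ while the robber never leaves $H'$.

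It remains to show that such a $u$ always exists. Suppose not: then $G[V\setminus\overline{N}(v)]\cong C_4$ for \emph{every} vertex $v$ of degree $3$. Fix one such $v$, write $N(v)=\{w_1,w_2,w_3\}$, and let the induced $4$-cycle on $V\setminus\overline{N}(v)$ be $a-b-c-d-a$. Since $G$ is connected and none of $a,b,c,d$ is adjacent to $v$, some edge joins $\{a,b,c,d\}$ to $\{w_1,w_2,w_3\}$; after relabelling the cycle we may take $a$ to be the endpoint of such an edge. Then $a$'s neighbours are $b,d$ (its cycle-neighbours) and a single $w_i$, since $\Delta=3$ and the induced cycle is chordless. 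Hence $\overline{N}(a)=\{a,b,d,w_i\}$, so $G[V\setminus\overline{N}(a)]$ is a graph on the four vertices $v,c,w_j,w_k$, where $\{w_j,w_k\}=\{w_1,w_2,w_3\}\setminus\{w_i\}$. But $c$ is adjacent to the two deleted vertices $b$ and $d$ and has $\deg(c)\le\Delta=3$, so $c$ has at most one neighbour among $v,w_j,w_k$; thus $G[V\setminus\overline{N}(a)]$ is not $2$-regular and so not isomorphic to $C_4$. Since $a$ has degree $3$, this contradicts our supposition. Hence a vertex $u$ of the required kind exists, and by the previous paragraph two lazy cops catch the robber.

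The genuine obstacle is this last step---understanding exactly when the confinement subgraph $H$ can be a copy of $C_4$ and seeing that, when it is, a different vertex of degree $3$ escapes that fate. The two supporting facts (that $C_4$ is the unique small connected non-cop-win graph, and that a robber confined to a cop-win induced subgraph of a connected graph is caught by one roaming cop) should be written out carefully but present no real difficulty.
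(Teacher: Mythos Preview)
Your proof is correct but takes a genuinely different route from the paper's. Both arguments begin by anchoring one cop at a degree-$3$ vertex $u$ so that the robber is confined to $H=G[V\setminus\overline{N}(u)]$, and both recognize that the only problematic shape for $H$ is $C_4$. The paper then handles that case head-on: it places the second cop opposite the cycle vertex $a$ that has an external neighbour $a'\in N(u)$, forcing the robber to $a$, and then \emph{moves the first cop} from $u$ to $a'$ to trap him. You instead prove that the $C_4$ case can always be sidestepped by a different choice of anchor: if $H$ is a $4$-cycle for the initial $u$, then some cycle vertex $a$ has degree $3$, and you show $G[V\setminus\overline{N}(a)]$ cannot be $2$-regular (since $c$ loses both of its cycle neighbours). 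This lets you treat every case uniformly via ``robber confined to a cop-win component,'' which is tidy.

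The trade-off is that the paper's direct $C_4$ argument is precisely the template reused in Section~\ref{sec D4 R3} for $n=9$, $\Delta=4$, where the $4$-cycle case is \emph{unavoidable}---in $R_3$ every vertex has degree $4$ and every $H$ is a $4$-cycle---and the analysis of how the first cop can (or cannot) leave $u$ is what isolates $R_3$. Your avoidance trick relies essentially on $\Delta=3$ and would not extend to that setting, so while it is cleaner for this lemma in isolation, the paper's approach sets up the machinery needed next. One minor wording point: your phrase ``a vertex $v_0\in H'$ from which he possesses a winning one-cop strategy'' is slightly loose, since the right target depends on the robber's current position; it suffices (and is standard) to walk to any vertex of $H'$ and then play the dismantling/shadow strategy, which wins from every cop position in a cop-win graph.
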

\begin{proof}
Consider a vertex $u$ with $\deg(u)=3$. As in the proof of Lemma \ref{lem Delta n-4}, we consider the induced subgraph external to $u$ and its neighbors, denoted $H:=G[V-\overline{N}(u)]$. We place one cop at $u$ initially so that the robber will be forced to start on a vertex in $H$.

This subgraph $H$ has at most 4 vertices; if $H$ has 3 or fewer vertices, then we may apply exactly the same arguments as those given in the proof of Lemma \ref{lem Delta n-4}. So, we need only consider the case where $H$ has 4 vertices, say $a,b,c,d$. We consider the degrees of these vertices within $H$.

\begin{itemize}
\item Let $A\subseteq \{a,b,c,d\}$ be the set of vertices that have degree 0 within $H$. If $A\neq\varnothing$, then we may place the second cop to force the robber to start somewhere in $A$. Then, we keep the first cop at $u$ which forces the robber to stay put; the second cop then chases down the robber, since $G$ is connected. 

\item If any of $\{a,b,c,d\}$ have degree 3 within $H$, then we place the second cop on such a vertex to dominate the entire graph $G$, thereby catching the robber within one turn. 

So, we only need to consider the cases where each vertex in $H$ has degree 1 or 2 within $H$. This means that the subgraph $H$ is either a path of length 4, or two disjoint edges, or a 4-cycle.

\item If $H$ is a path of length 4, start the second cop anywhere in $H$. By keeping the first cop at $u$, we can use the second cop to catch the robber on $H$. 

\item If $H$ is two disjoint edges, start the second cop anywhere in $H$. The robber is forced to start on a vertex incident to the other edge of $H$, say $ab$. By keeping the first cop at $u$, the robber is forced to be on either $a$ or $b$. We move the second cop through the graph to chase down the robber. 

\item So, assume $H$ is a 4-cycle: $a-b-c-d-a$. Because $G$ is connected, there must be at least one vertex of $H$, say $a$, that has an external neighbor, say $a'\in N(u)$. Since $\Delta=3$, we know this is the \textit{only} external neighbor of $a$. We choose to start the second cop at $c$ (the vertex opposite $a$ on the cycle). This forces the robber to start at $a$. We then send the first cop from $u$ to $a'$. This traps the robber since each of his only three neighbors are either occupied by the first cop or guarded by the second cop. 
\qedhere 
\end{itemize}
Lemmas \ref{lem Delta n-4} and \ref{lem v8} together prove Theorem \ref{thm 8 vertices}: $c_L\leq 2$ when $|V|\leq 8$.

\end{proof}

\subsection{$|V|=9$ and $\Delta\geq 4$}
\label{sec D4 R3}
From the previous section and Theorem \ref{thm Rn}, we know that the smallest graph(s) with $c_L=3$ has (have) 9 vertices. We now begin our work towards showing that $R_3=K_3\square K_3$ is, in fact, the unique such graph. 

Specifically, in this section, we work with graphs on 9 vertices with ``large'' maximum degree, which in this context means $\Delta\geq 4$. We will show that such graphs either have $c_L\leq 2$ or are forced to be $R_3$ itself. Luckily, some of the work towards this result has been achieved already. If $\Delta\geq 5$, then Lemma \ref{lem Delta n-4} applies and so $c_L\leq 2$.

If $\Delta=4$, then we follow an argument similar to that in the proof of Lemma \ref{lem v8}: we take a vertex $u$ with $\deg(u)=4$ and consider the induced subgraph external to $u$ and its neighbors, denoted $H:=G[V-\overline{N}(u)]$. We place one cop at $u$ initially so that the robber will be forced to start on a vertex in $H$. Notice that $H$ has 4 vertices and almost (but not all) of the arguments from the proof of Lemma \ref{lem v8} apply:
\begin{itemize}
\item If any vertices in $H$ have degree 0 within $H$, then we can place the second cop initially to force the robber to start on one of those vertices. He is then forced to stay put and we can chase him down with the second cop. 
\item If any of the vertices of $H$ have degree 3 within $H$, then we place the second cop on one of those vertices. Now, the two lazy cops dominate the entire graph so the robber is caught immediately. 

So, we only need to consider the cases where each vertex in $H$ has degree 1 or 2 within 4, meaning that $H$ is either a path of length 4, two disjoint edges, or a 4-cycle.
\item If $H$ is a path of length 4 or two disjoint edges, then the same arguments apply: we can start the second cop on $H$ and use him to chase down the robber (either within $H$ or by temporarily leaving $H$).
\item However, the case where $H$ is a 4-cycle is potentially troublesome. If we keep the first cop at $u$, then the robber can evade the second cop on the 4-cycle. And if we attempt to move the first cop from $u$ towards $H$, then we may run into a different problem: a vertex $a\in H$ may have \textit{two neighbors} external to $H$, which means the first cop, having moved from $u$, may not be able to guard \textit{both} of those neighbors. 
\item At least, if $H$ is a 4-cycle and some $a\in H$ has 0 or 1 neighbors \textit{external} to $H$, then the previous arguments do indeed apply. We can position the second cop to force the robber to start on that $a$ and keep him at bay. The first cop may then chase down the robber along the shortest path through $G$ from $u$ to $a$. (If $a$ has 1 external neighbor in $N(u)$, then the first cop simply moves there. Otherwise, he travels through $G$ while the robber is forced to stay put.)
\end{itemize}
So, in fact, we realize that the only worrisome situation is where $H$ is a 4-cycle and \textit{every} vertex in $H$ has \textit{two neighbors external to $H$}. This situation is the content of the lemma below.

\begin{lem}
Assume $G$ has 9 vertices, $\Delta=4$, and $u$ is a vertex with $\deg(u)=4$. Define $H$ to be the induced subgraph $H:=G[V-\overline{N}(u)]$ and suppose $H$ is a 4-cycle. Further, suppose that every vertex in $H$ has two neighbors external to $H$ (i.e. in $N(u)$). Then, either $c_L\leq 2$ or $G\cong R_3$.
\end{lem}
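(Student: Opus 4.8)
Let me set up coordinates. Write $N(u) = \{p_1, p_2, p_3, p_4\}$ and $V(H) = \{a,b,c,d\}$ with $H$ the $4$-cycle $a-b-c-d-a$. By hypothesis each of $a,b,c,d$ has exactly two neighbors among $N(u)$ (it can have no more since $\Delta = 4$ and it already has two neighbors inside $H$). So there are exactly $8$ edges between $H$ and $N(u)$. Each $p_i$ also is adjacent to $u$, so $p_i$ has at most $3$ neighbors in $H$. Counting from the $N(u)$ side, the $8$ cross-edges are distributed among $p_1,\dots,p_4$, so on average each $p_i$ has $2$ neighbors in $H$; in particular no $p_i$ can be adjacent to all four of $a,b,c,d$ (that would force the remaining three $p_j$ to absorb only $4$ edges, fine numerically, but then such a $p_i$ together with $u$ would dominate $G$ — wait, $p_i \sim u$, $p_i \sim a,b,c,d$, so $\{u, p_i\}$ is a dominating set and two lazy cops win). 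So I may assume each $p_i$ has at most $3$ neighbors in $H$, and the degree sequence of the $p_i$'s into $H$ is a partition of $8$ into four parts each $\le 3$: the possibilities are $(3,3,1,1)$, $(3,2,2,1)$, $(2,2,2,2)$. I would like to argue the first two cases give $c_L \le 2$ and the last case, after using the $4$-cycle structure, forces $G \cong R_3$.

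**The easy cases.** In the $(3,3,1,1)$ and $(3,2,2,1)$ cases there is a $p_i$ with exactly three neighbors in $H$, say $p_1 \sim a, b, c$ (it must miss exactly one vertex of the $4$-cycle, say $d$). The plan here is to start one cop at $p_1$ and the other at $u$. The robber is confined to $H$ (by the cop at $u$) and moreover confined to $\{d\}$ together with... no: $p_1$ guards $a,b,c$, so the robber must start at $d$ and cannot move — he is immediately stuck. Then keep the cop at $u$ and walk the $p_1$-cop along the cycle to $d$: since $p_1 \sim c \sim d$ and $p_1 \sim a \sim d$, this takes two moves, and the robber cannot leave $d$ the whole time (his only neighbors $a,c$ are adjacent to $p_1$, and after the first step the cop-on-$p_1$ has moved to $c$ say, but then $u$ still guards $N(u)$, and $d$'s other potential escape is into $N(u)$ — but $d$ has exactly two neighbors in $N(u)$, and... here I need to be slightly careful). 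Actually the cleanest version: the cop at $u$ guards $\overline{N}(u) = \{u,p_1,p_2,p_3,p_4\}$, and since $d$'s only neighbors are $c$, $a$, and two of the $p_i$, the robber at $d$ can only ever move to $a$ or $c$, both permanently guarded by the cop who sits at $p_1$ or who walks $p_1 \to c \to d$. I expect this to go through in both mixed cases; the $(3,2,2,1)$ case is identical since I only used the existence of one degree-$3$ vertex $p_1$.

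**The main case: $(2,2,2,2)$ forces $R_3$.** Now every $p_i$ has exactly two neighbors in $H$, every vertex of $H$ has exactly two neighbors in $N(u)$, so the bipartite graph between $N(u)$ and $V(H)$ is $2$-regular, hence a disjoint union of cycles of even length: either one $8$-cycle, or two $4$-cycles, or a $4$-cycle and... (cycles of length $\ge 4$ in a bipartite graph, and the two sides each have size $4$, so the components are an $8$-cycle, or two $4$-cycles). I would handle these sub-cases by trying to catch the robber with two cops, and show that whenever the catch fails the bipartite graph plus the $4$-cycle $H$ is exactly the incidence structure of $R_3$. Concretely, $R_3$ has vertex set $\{1,2,3\}^2$; fix $u = (1,1)$, so $N(u) = \{(1,2),(1,3),(2,1),(3,1)\}$ and $V(H) = \{(2,2),(2,3),(3,2),(3,3)\}$ with $H$ being the $4$-cycle $(2,2)-(2,3)-(3,3)-(3,2)-(2,2)$; the cross-edges are $(2,2)\sim(2,1),(1,2)$; $(2,3)\sim(2,1),(1,3)$; $(3,2)\sim(3,1),(1,2)$; $(3,3)\sim(3,1),(1,3)$ — and one checks the cross-bipartite graph is the disjoint union of two $4$-cycles, aligned with $H$ in a specific "rook" pattern (opposite vertices of $H$ share a $N(u)$-neighbor: $(2,2)$ and $(3,2)$ both see $(1,2)$; $(2,3)$ and $(3,3)$ both see $(1,3)$, etc.). The plan is: (1) rule out the $8$-cycle sub-case and the "misaligned" two-$4$-cycle sub-cases by exhibiting a two-cop capture — typically, start a cop at some $p_i$ that guards two *adjacent* vertices of $H$, leaving the robber on one of the two remaining vertices of the cycle, which now form a path the second cop can sweep while the $u$-cop holds; (2) show the only configuration with no such capture is the one where the cross-edges pair up *opposite* vertices of $H$, and then verify directly that adding the forced edges among $N(u)$ (each $p_i$ has degree $4$: one edge to $u$, two to $H$, so exactly one more edge, necessarily to another $p_j$) — these last edges form a perfect matching on $N(u)$, and checking it is the "right" matching — yields precisely $R_3$, and that the "wrong" matching again admits a two-cop capture.

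**Main obstacle.** The hard part is the bookkeeping in the $(2,2,2,2)$ case: there are several ways the two $4$-cycles of the cross-bipartite graph can sit relative to the $4$-cycle $H$, and then the matching among $\{p_1,p_2,p_3,p_4\}$ adds another binary choice, so one is really checking a handful of labeled configurations and must, for each non-$R_3$ one, produce an explicit winning script for two lazy cops (choosing initial squares, then describing the robber-confinement as the mobile cop walks). Organizing this so it is not a tedious case explosion — ideally by an invariant like "some cop can simultaneously guard two adjacent cycle-vertices" versus "the structure is rook-symmetric" — is where the real work lies; I would look for the cleanest dichotomy that collapses all the capture cases into one argument and isolates $R_3$ as the unique rigid leftover.
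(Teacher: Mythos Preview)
Your degree-partition framework is a sensible way to organize the cases, but there are concrete errors in the execution.

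\textbf{The ``easy cases'' capture fails.} With cops at $u$ and $p_1$ (where $p_1\sim a,b,c$) and the robber pinned at $d$, walking the cop $p_1\to c$ unguards $a$: the cop at $u$ does not see $a$, and the cop now at $c$ does not see $a$ either since $a,c$ are opposite on the cycle. The robber steps to $a$; you then try $c\to d$ or $c\to b$, and either way the robber slips to the newly unguarded opposite vertex. You are reduced to one cop chasing on a $4$-cycle while the other sits at $u$, which the robber wins forever. The paper's remedy (its Claim~1) is to place one cop \emph{on the cycle}, say at $a$, with the other at $u$; the robber must start at $c$, and then the $u$-cop moves to $c$'s external neighbor other than $p_1$. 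The cop at $a$ guards $b$, $d$, \emph{and} $p_1$ simultaneously, so the robber is trapped in one move. This same strategy is what actually disposes of your $(3,3,1,1)$ and $(3,2,2,1)$ cases.

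\textbf{Your description of $R_3$ is inverted.} With $u=(1,1)$ the cycle is $(2,2)\!-\!(2,3)\!-\!(3,3)\!-\!(3,2)$; the vertices $(2,2)$ and $(3,2)$ you cite as sharing the neighbor $(1,2)$ are \emph{adjacent} on the cycle, not opposite. The cross-bipartite graph in $R_3$ is a single $8$-cycle, not two $4$-cycles. Indeed the paper's Claim~1 shows that opposite cycle-vertices \emph{never} share a $p$-neighbor (else two cops win by the strategy above); this is the structural constraint that already forces the $(2,2,2,2)$ distribution and governs the rest of the analysis.

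\textbf{The matching on $N(u)$ is not automatic.} In the $(2,2,2,2)$ case you write that each $p_i$ has degree $4$ and hence one more edge, but $\Delta=4$ is only an upper bound. The paper needs a separate argument (its Claim~3) that having $0$ or $1$ edges among the $p_i$ allows a two-cop win; only then are two disjoint edges among $N(u)$ forced.

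The paper's route, for comparison: Claim~1 (opposite $H$-vertices have disjoint external neighborhoods) forces your $(2,2,2,2)$; Claim~2 (two $p_i$ sharing an $H$-neighbor are nonadjacent) constrains the matching; Claim~3 forces at least two, hence two disjoint, edges among the $p_i$. A short check of the remaining labellings then isolates $R_3$.
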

\begin{proof} 
Using the assumptions of the lemma, as well as some movements of two lazy cops, we will show that the only way to avoid a scenario where two lazy cops can win forces the creation of the specific graph $R_3$.

Label the vertices of $H$ as the 4-cycle $a-b-c-d-a$, and label the 4 neighbors of $u$ as $v,w,x,y$. We use the notation $N'(a)$ to mean, for instance, the neighborhood of $a$ external to $H$, i.e. $N'(a)=N(a)\cap N(u)$. We now begin with some helpful observations about these neighborhoods and (potential) adjacencies amongst $v,w,x,y$.
\begin{enumerate}[Cl{a}{i}m 1.]
\item Opposing vertices on the 4-cycle comprising $H$ must have disjoint external neighborhoods; that is, $N'(a)\cap N'(c)=N'(b)\cap N'(d)=\varnothing$.

\textit{Proof of Claim 1:} Assume for sake of contradiction that, say, $a$ and $c$ share a common external neighbor, say $v$. Start the cops at $u$ and $a$, forcing the robber to start at $c$. Without loss of generality, we can say $c$'s other external neighbor is $w$. (See Figure \ref{fig v9-c1}.) Move the cop from $u$ to $w$ to threaten the robber, who is now trapped: his only unoccupied neighbors (namely $b,d,v$) are all guarded by the cop at $a$, meaning two lazy cops could win. (The same argument applies to $b$ and $d$.)

Since each of $\{a,b,c,d\}$ has two external neighbors, we may further deduce that $N'(a)$ and $N'(c)$ partition $\{v,w,x,y\}$, and the same holds for $N'(b)$ and $N'(d)$.

\begin{figure}[h]
\caption{Opposing vertices on the 4-cycle cannot have a common neighbor.}
\centering 
\includegraphics[width=0.4\textwidth]{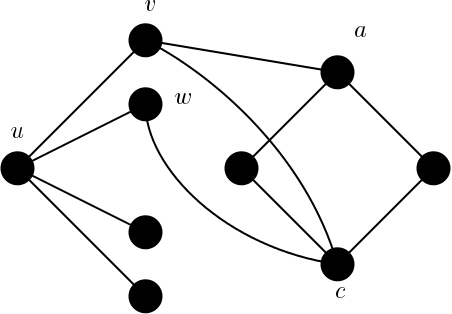}
\label{fig v9-c1}
\end{figure}

\item For any two vertices amongst $\{v,w,x,y\}$, if they share a neighbor in $H$, then they cannot be adjacent in $G$.

\textit{Proof of Claim 2:}  Assume for sake of contradiction that, say, $v$ and $w$ share a common neighbor in $H$, say $a$. (See Figure \ref{fig v9-c2}.) Start the cops at $u$ and $c$, forcing the robber to start at $a$. Move the cop from $u$ to $v$ to threaten the robber, who must now move to $w$. If $vw\in E$, then the robber is caught, meaning two lazy cops could win. (The same argument applies to any pair of vertices from $N(u)$.)

The contrapositive of this claim will be useful below, so we state it here: If $vw\in E$, then $v$ and $w$ have no common neighbor in $H$.

\begin{figure}[h]
\caption{If $v,w$ have a common neighbor, they cannot be adjacent.}
\centering 
\includegraphics[width=0.4\textwidth]{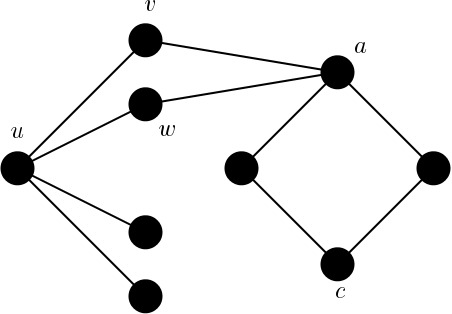}
\label{fig v9-c2}
\end{figure}

\item Assuming Claims 1 and 2 above, then there must be at least two edges amongst $\{v,w,x,y\}$.

\textit{Proof of Claim 3:} Assume for sake of contradiction that there are no edges amongst $\{v,w,x,y\}$. (See Figure \ref{fig v9-c3a}.) Start the cops at $u$ and $c$, forcing the robber to start at $a$. Without loss of generality, suppose $N'(a)=\{v,w\}$.  Move the cop from $u$ to $v$, forcing the robber to move to $w$. 

Now, apply the conclusion of Claim 1: \textit{exactly} one of $\{b,d\}$ is adjacent to $w$; suppose it's $b$. Move the cop from $c$ to $b$, threatening the robber who is now trapped at $w$: his only unoccupied neighbors (namely $a,u$) are both guarded by the cop at $v$. 

\begin{figure}[h]
\caption{When there are 0 internal edges amongst $\{v,w,x,y\}$.}
\centering 
\includegraphics[width=0.4\textwidth]{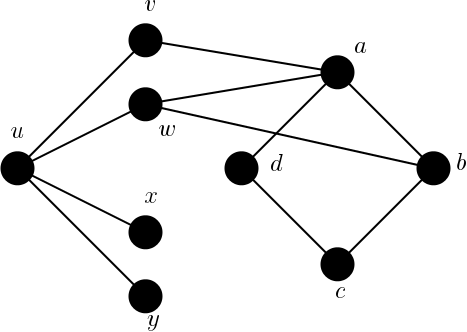}
\label{fig v9-c3a}
\end{figure}

Next, assume for sake of contradiction that there is one edge amongst $\{v,w,x,y\}$; without loss of generality, suppose that edge is $xy$. (See Figure \ref{fig v9-c3b}.) We will show that the cops can position themselves to force a situation like the one described in the previous paragraph. 

\begin{figure}[h]
\caption{When there is 1 internal edge amongst $\{v,w,x,y\}$.}
\centering 
\includegraphics[width=0.4\textwidth]{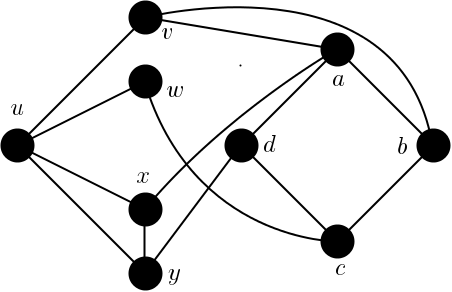}
\label{fig v9-c3b}
\end{figure} 

Start the cops at $u$ and $c$, forcing the robber to start at $a$. Now, consider $N'(a)$: it cannot be $\{x,y\}$, since the existence of the edge $xy$ would violate Claim 2 above; it also cannot be $\{v,w\}$ since Claim 1 would imply that $N'(c)=\{x,y\}$, which then violates Claim 2. Thus, $a$ must be adjacent to exactly one of $\{v,w\}$ and exactly one of $\{x,y\}$; without loss of generality, let's say $N'(a)=\{v,x\}$.

Send the cop from $u$ to $x$. This threatens the robber and forces him to move to $v$. As described in the scenario above, we move the other cop from $c$ to whichever of $\{b,d\}$ is adjacent to $v$; suppose it's $b$, as in Figure \ref{fig v9-c3b}. The robber is now trapped since his neighbors $u$ and $a$ are guarded by the cop at $x$.
\end{enumerate}

Thus, the only way to possibly avoid having $c_L(G)\leq 2$ requires two edges amongst $\{v,w,x,y\}$. In fact, these two edges must be disjoint, as we will now show. Since each of $\{a,b,c,d\}$ has exactly two neighbors in $\{v,w,x,y\}$ and, by Claim 1, $N(a),N(c)$ and $N(b),N(d)$ form partitions of $\{v,w,x,y\}$, we may deduce that, in fact, each of $\{v,w,x,y\}$ has two neighbors in $\{a,b,c,d\}$. Therefore, none of $\{v,w,x,y\}$ can acquire two more neighbors because this would violate $\Delta=4$. Since there must be two edges amongst $\{v,w,x,y\}$, this implies those two edges must be disjoint, i.e. $vw,xy\in E$ or $vy,wx\in E$.

Let's say $vw,xy\in E$ are those edges. We will now show that the only way to add the remaining edges to the graph, while obeying the results of the above claims, either creates a particular graph with $c_L=2$ or else creates an isomorphic copy of $R_3$. 

As shown above, Claims 1 and 2 together imply that each vertex in $H$ is adjacent to exactly one of $\{v,w\}$ and exactly one of $\{x,y\}$. Let's say $N'(a)=\{v,x\}$, which then forces $N'(c)=\{w,y\}$, by Claim 1. We now have two cases, dependent on whether the other neighborhoods $N'(b),N'(d)$ also equal $\{v,x\},\{w,y\}$, in some order. 

\begin{enumerate}[C{a}se 1.]
\item Suppose that $N'(b),N'(d)$ partition $\{v,w,x,y\}$ into $\{v,x\}$ and $\{w,y\}$. Without loss of generality, let's say $N'(b)=\{v,x\}$ and $N'(d)=\{w,y\}$. This particular graph has $c_L=2$. Start the cops at $u$ and $c$, forcing the robber to start at $a$. Move the cop from $u$ to $x$, threatening the robber and forcing him to move to $v$. Move the other cop from $c$ to $w$, threatening the robber who is now trapped: his three neighbors $a,b,u$ are all guarded by the cop at $x$.

\item Suppose that the neighborhoods $N'(b),N'(d)$ partition $\{v,w,x,y\}$ in a different way; in fact, the partition must be $\{v,y\},\{w,x\}$. We have two cases based on $N'(b)$; both yield an isomorphic copy of $R_3$.

\begin{enumerate}[C{a}se {2}a.]
\item Suppose $N'(b)=\{v,y\}$ and $N'(d)=\{w,x\}$. This graph is isomorphic to $R_3$ since we can arrange the vertices into three rows and columns such that two vertices are adjacent if and only if they belong to the same row or column: 
\[
\begin{array}{ccc}
u & v & w \\
x & a & d \\
y & b & c
\end{array}
\]
\item Suppose $N'(b)=\{w,x\}$ and $N'(d)=\{v,y\}$. This graph is also isomorphic to $R_3$: 
\[
\begin{array}{ccc}
u & v & w \\
x & a & b \\
y & d & c
\end{array}
\]
\end{enumerate}
\end{enumerate}
This completes the proof. 
\end{proof}
We conclude that with 9 vertices and $\Delta=4$, either two lazy cops can win or else the graph is $R_3$. We note that, in fact, $R_3$ is 4-regular. Given the Beveridge et al. result about the Petersen Graph, the conjecture they made about the $(k,5)$-cages, and this result we have so far, we strongly suspect that there is a close relationship between the cop number and regularity of a graph. 

Essentially, we have found that graphs where all the vertices have ``large'' degree are good for the cops because they can cover more territory simultaneously and therefore win within one or two moves. Likewise, we will find in the next section that graphs where all the vertices have ``small'' degree are good for the cops because the robber does not have many escape routes so the cops can corner him (although it may take several moves). A regular graph whose regularity is neither too large nor too small seems to strike a balance between these two phenomena. Obviously, these statements are somewhat vague (indeed, what exactly is ``large enough''?) but they are based on our extensive study of both ordinary and lazy cops. We believe there is something significant at play here; we just don't know exactly what. Indeed, we empathize with Riemann: ``If only I had the theorems! Then I should find the proofs easily enough.'' \cite{lakatos2015proofs}

\section{Graphs with ``small'' maximum degree}
In this section, we continue to work with graphs on 9 vertices and now narrow our focus on those with $2\leq \delta\leq\Delta\leq 3$. Since we work with connected, simple graphs, this necessitates $8\leq |E|\leq 13$. We will show that any such graph has $c_L\leq 2$ by separately considering graphs for which $8\leq |E|\leq 10$ and for which $11\leq |E|\leq 13$. Specifically, we will handle 8 to 10 edges quickly (in Section \ref{sec 10 edges}) since these graphs either have known lazy cop number or can be analyzed easily. Handling 11 to 13 edges (in Section \ref{sec cases}) will amount to seven successive scenarios, based on taking two vertices of degree three and considering how many neighbors they have in common. Each scenario allows us to either find a way for two lazy cops to win or reduce the scenario to a previously handled one.

This overall approach is effective but we believe the results here are a bit longer than they need to be. Really, we struggled to find a unifying principle amongst these graphs and how the cops manage to win on them. As we described above, it feels like low degree counts should benefit the cops since the robber has few ``escape routes'' no matter where he is. Indeed, we even used {\tt SageMath} to enumerate and show all 147 such graphs (for $10\leq |E|\leq 13$; 8 edges implies a tree and 9 edges implies a cycle, for both of which the lazy cop number is known) and found that, for each one, we could easily and quickly confirm by eye how two lazy cops could win. However, this sometimes requires the cops to make several moves, as opposed to the results in Section \ref{sec large degree}, where we typically made one cop stay put to guard a large portion of the graph from his high-degree vertex.

We believe this is why the results in this section were only achievable by exhaustive (and exhausting) analysis, but we have hope that there is another way of looking at it that makes the main result of this section appear more immediately. As we describe in Section \ref{sec summary}, this is of particular interest when we consider generalizing the overall result of this article, conjecturing that the unique smallest graph with $c_L=n$ is $R_n=K_n\square K_n$. Any progress towards this conjecture will require a better, more coherent analysis of graphs with ``small'' degree counts. 

\subsection{$8\leq |E|\leq 10 \implies  c_L\leq 2$} 
\label{sec 10 edges}
\begin{proof} A graph with 9 vertices and 8 edges is a tree $T$, and it is known that $c(T)=c_L(T)=1$ \cite{nowakowski1983vertex}. A graph with 9 vertices and 9 edges is the cycle $C_9$, and it is known that $c(C_n)=c_L(C_n)=2$ (for any $n\geq 4$) \cite{bonato2011game}. 

A (connected) graph with 9 vertices, 10 edges, and $2\leq\delta$ must be, essentially, two conjoined cycles. Observe that there must be exactly two vertices of degree 3, say $u$ and $v$, while the rest have degree 2. By taking each of $u$'s neighbors and following the subsequent paths they induce until reaching $v$ or else returning to $u$, we find that the graph is either: (i) two disjoint cycles, one containing $u$ and the other containing $v$, with a path from $u$ to $v$ connecting them; or (ii) three disjoint paths from $u$ to $v$.

In either case, we choose to start one cop at $u$ and another at $v$. The robber starts anywhere he can. If the graph is of variety (ii), the cop at $v$ travels along the path to $u$ that the robber occupies, so he will be caught. If the graph is of variety (i), we can do the same thing if the robber is on the path from $u$ to $v$. Otherwise, the robber is on one of the disjoint cycles, let's say the one that contains $u$. In that case, we fix the cop at $u$ to prevent the robber from leaving the cycle, meanwhile chasing him down with the other cop. 
\end{proof}

\subsection{For $|E|\geq 11$, scenarios that imply $c_L\leq 2$} 
\label{sec cases}
Throughout this section, we assume $G$ is connected on 9 vertices with $2\leq\delta\leq\Delta\leq 3$. This implies the existence of (at least) two vertices of degree 3. Each lemma in this section considers how many neighbors those two degree 3 vertices have in common. We show that each successive scenario either leads to two lazy cops winning, or else a previous scenario which has already been handled by 2 cops.  By the end of this section, we will have exhausted all possible scenarios. 
\begin{lem}
\label{lem case 1}
If there exist vertices $u_1,u_2$ both with degree 3 such that $\overline{N}(u_1)\cap \overline{N}(u_2)=\varnothing$, then $c_L(G)\leq 2$. 
\end{lem}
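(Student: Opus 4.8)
The plan is to exploit the fact that $u_1$ and $u_2$ have disjoint closed neighborhoods in a 9-vertex graph, so $\overline{N}(u_1)$ and $\overline{N}(u_2)$ together account for exactly 8 vertices, leaving precisely one vertex $z$ outside both. I would start one cop at $u_1$ and the other at $u_2$, which immediately forces the robber to begin on $z$ (the only vertex not dominated by the two cops). So the opening is essentially free: the robber is pinned at $z$ and cannot move without being captured, as long as both cops stay put.

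The next step is to observe that $z$ has degree at most $3$, so $z$ has at most $3$ neighbors, and every neighbor of $z$ lies in $N(u_1)\cup N(u_2)$ (since $z$ is adjacent to neither $u_1$ nor $u_2$, as those would put $z$ in a closed neighborhood). I would argue that since $z$ has at most $3$ neighbors, and these are split among the at-most-$3$ neighbors of $u_1$ and the at-most-$3$ neighbors of $u_2$, one of the two cops — say the one at $u_1$ — has the property that the set $N(u_1)$ contains at most one neighbor of $z$ that I cannot guard. More carefully: I want to move the cop at $u_1$ toward $z$ along a shortest path in $G$ (which exists, $G$ connected) while the cop at $u_2$ stays put to keep the robber confined to $z$'s neighborhood that avoids $N(u_2)$. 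The robber stays frozen at $z$ while the first cop walks over, unless a move of the first cop vacates $u_1$ in a way that gives the robber an escape into $N(u_1)\setminus\{$guarded vertices$\}$.

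The key structural point to nail down is: can the robber, once the first cop leaves $u_1$, run around one of the two cycle-like pieces of the graph indefinitely? Here I would use the edge count: with $11\le |E|\le 13$ and only two degree-$3$ vertices $u_1,u_2$... wait — the hypothesis only says $u_1,u_2$ have degree 3; there may be more degree-3 vertices. So instead I would argue directly about local structure around $z$. Since $z$ has at most $3$ neighbors and they all lie in $N(u_1)\sqcup N(u_2)$, by pigeonhole one cop, say the one at $u_2$, is adjacent to at most one neighbor of $z$; keep that cop at $u_2$ — it still guards at least that neighbor — and in fact I claim the better move is: pick the cop whose neighbor-set meets $N(z)$ in the \emph{larger} number of vertices to stay put (it guards $2$ or $3$ of $z$'s $\le 3$ neighbors), and send the other cop, which needs to deal with at most one uncovered neighbor of $z$, chasing along a shortest path; since that cop only has to cover a single escape vertex, standard one-cop-guards-a-path / pin arguments (as used repeatedly in Lemma \ref{lem v8} and Lemma \ref{lem Delta n-4}) let it corner the robber, who is confined to at most $z$ and one adjacent vertex.

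The main obstacle I anticipate is the case analysis when $z$ has degree exactly $3$ and the three neighbors of $z$ are distributed $2$--$1$ or even $3$--$0$ between $N(u_1)$ and $N(u_2)$, and the "stationary" cop's neighbor set is exactly the two guarded neighbors of $z$ while $u_1$ or $u_2$ itself might be adjacent to that third neighbor, creating a short cycle the robber can try to circulate. I would handle this by noting $\deg(u_i)=3$ bounds how much room the robber has, and by moving the chasing cop onto the unique uncovered neighbor $w$ of $z$ first (threatening from $w$), forcing the robber back to $z$, after which the stationary cop guards everything else and the chasing cop closes in; if $w$ is not adjacent to $u_i$ I walk the cop there with the robber frozen at $z$ meanwhile. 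Verifying that no configuration of the remaining edges lets the robber slip past will require a short but careful finite check, and that is where I'd spend the real effort — but it is bounded, since everything happens within $N[z]\cup N(u_1)\cup N(u_2)$ and the degree constraint $\le 3$ severely limits the possibilities.
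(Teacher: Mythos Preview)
Your approach is essentially identical to the paper's: place cops at $u_1,u_2$, force the robber onto the unique ninth vertex $z$, use pigeonhole on $|N(z)\cap N(u_i)|$ to keep the cop guarding the majority of $z$'s neighbors stationary, and send the other cop to close off the at most one remaining escape. The obstacles you anticipate are phantoms: since $\overline{N}(u_1)$ and $\overline{N}(u_2)$ are disjoint, the single uncovered neighbor $w$ necessarily lies in $N(u_i)$ for the \emph{moving} cop's $u_i$, so that cop reaches $w$ in one step and the robber is immediately trapped---no cycle-chasing or finite case check is needed.
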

\begin{proof} 
This scenario implies that there is \textit{exactly one} vertex $a$ outside of the eight vertices comprising $\overline{N}(u_1)\cup \overline{N}(u_2)$. Start one cop at $u_1$ and another at $u_2$, forcing the robber to start at $a$.

\begin{figure}[h]
\caption{When $u_1,u_2$ have no neighbors in common (including each other).}
\centering 
\includegraphics[width=0.4\textwidth]{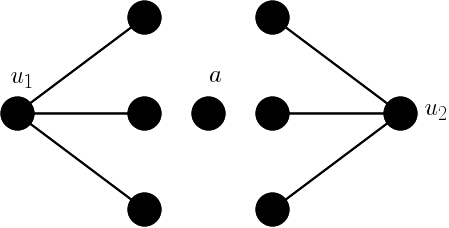}
\label{fig lemma-4-1}
\end{figure}  

Consider $a$'s neighbors, of which there are at most 3. Specifically, consider $N_1:=N(a)\cap N(u_1)$ and $N_2:=N(a)\cap N(u_2)$. By the Pigeonhole Principle, one of these sets has at most 1 element; let's say that set is $N_1$. Then we choose to station the cop at $u_2$, preventing $R$ from moving into $N(u_2)$. Meanwhile, we send the cop at $u_1$ towards the robber along the shortest path from $u_1$ to $v$: if $N_1=\{w\}$, say, then we send him along the path $u_1-w-v$; if $N_1=\varnothing$, then we send him along any path in $G$ (since it is connected), knowing that the robber cannot, in fact, move safely. See Figure \ref{fig lemma-4-1}.
\end{proof}

\begin{lem}
\label{lem case 2}
If there exist vertices $u_1,u_2$ both with degree 3 such that $\overline{N}(u_1)\cap \overline{N}(u_2)=\{z\}$ (where $z\neq u_1,u_2$), then $c_L(G)\leq 2$. 
\end{lem}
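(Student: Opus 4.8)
The plan is to mimic the structure of the proof of Lemma~\ref{lem case 1}, but now the two degree-3 vertices $u_1, u_2$ share exactly one common neighbor $z$. Since $z \notin \{u_1,u_2\}$ and $u_1 \not\sim u_2$ (else $u_2 \in \overline N(u_1) \cap \overline N(u_2)$, giving a second common element), the set $\overline N(u_1) \cup \overline N(u_2)$ has exactly $4 + 4 - 1 = 7$ vertices, leaving \emph{two} vertices $a, b$ outside it. The strategy is to start one cop at $u_1$ and the other at $u_2$, which confines the robber to $\{a, b\}$ (and possibly $z$, if $z$ is reachable — but $z$ is dominated by both cops, so the robber cannot be there either). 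Then I would exploit the fact that each of $a$ and $b$ has degree at most 3, with all neighbors lying in $\overline N(u_1) \cup \overline N(u_2) \setminus \{u_1, u_2\} \cup \{a,b\}$, and argue by a Pigeonhole-type count that one of the two cops can be parked so as to forbid a large chunk of the robber's escape routes while the other chases him down.

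**Key steps in order.**

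First I would record the structural facts: $|V \setminus (\overline N(u_1) \cup \overline N(u_2))| = 2$, call these vertices $a$ and $b$; each of $u_1, u_2$ has exactly two ``private'' neighbors besides $z$. Second, place cops at $u_1$ and $u_2$; the robber must start at $a$ or $b$ (not at $z$, nor at any neighbor of $u_1$ or $u_2$). Third, I would split on whether $a \sim b$. If $a \not\sim b$, then each of $a$ and $b$ has all $\le 3$ of its neighbors among $\overline N(u_1) \cup \overline N(u_2) \setminus \{u_1,u_2\}$, which consists of $z$ together with the four private neighbors (two of $u_1$, two of $u_2$). For the vertex the robber occupies — say $a$ — partition $N(a)$ into $N(a) \cap N(u_1)$ and $N(a) \cap N(u_2)$, with $z$ possibly in the overlap; Pigeonhole gives one side, say the $u_1$-side, with at most one private neighbor of $u_1$ in it. Then keeping the cop at $u_2$ on station blocks all of $N(u_2)$, and the only neighbor of $a$ the robber could still use is that single $u_1$-private vertex $w$ (and possibly $z$, which I handle by noting $z \in N(u_2)$, hence already guarded). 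The $u_1$-cop walks $u_1 \to w \to a$ (or just chases through $G$ if there is no such $w$), finishing. Fourth, if $a \sim b$, the robber has the edge $ab$ available as an escape; here I would use the edge $u_1 z$ or $u_2 z$ and the connectivity of $G$: park one cop to guard $z$ plus one private side, forcing the robber onto the short path $a-b$ which the other cop sweeps, much as in the ``two conjoined cycles'' argument of Section~\ref{sec 10 edges}. Alternatively, and probably cleaner, I would try to reduce directly to Lemma~\ref{lem case 1}: if in this configuration one can exhibit \emph{another} pair of degree-3 vertices with disjoint closed neighborhoods, we are done; otherwise the edge count $|E| \le 13$ forces the local structure to be tightly constrained, and a small finite case check suffices.

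**Main obstacle.**

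The delicate point is the case $a \sim b$ (or, more generally, when the robber sitting on $\{a,b\}$ genuinely has three usable escape squares none of which a single parked cop can simultaneously block). In the $a \not\sim b$ case the Pigeonhole argument is essentially identical to Lemma~\ref{lem case 1}; the real work is showing that when $a \sim b$ the remaining adjacency constraints — every vertex has degree between $2$ and $3$, $\Delta = 3$ so neither $u_1$ nor $u_2$ nor $z$ can absorb extra edges, and $|E| \le 13$ — leave so little freedom that the cops can always set up a trap, or else that a cleaner pair of degree-3 vertices exists so that an earlier lemma applies. I expect this to require either a short explicit enumeration of the few admissible graphs or a careful argument that the robber's ``third escape route'' always leads back into a region dominated by the stationary cop. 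Managing that case without an unwieldy case split is the crux.
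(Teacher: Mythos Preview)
Your overall plan --- start cops at $u_1,u_2$, split on whether $ab\in E$, and in the harder case look for reductions to Lemma~\ref{lem case 1} --- is exactly the paper's approach, and your treatment of the case $ab\notin E$ is essentially identical to theirs. The pigeonhole observation that one of the two cops already guards all but at most one neighbor of the robber's starting vertex is precisely what the paper uses.

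The gap is in the case $ab\in E$. Your first concrete suggestion, ``park one cop to guard $z$ plus one private side, forcing the robber onto the short path $a$--$b$ which the other cop sweeps,'' does not work as stated: if, say, $a$ is adjacent to a private neighbor $x$ of $u_2$ while $b$ is adjacent to a private neighbor $w$ of $u_1$, then neither parked cop confines the robber to $\{a,b\}$, and the sweeping cop can be evaded indefinitely through those side exits. Your fallback --- find another pair of degree-$3$ vertices to which Lemma~\ref{lem case 1} applies, else enumerate --- is the right instinct and is exactly what the paper does, but you have not carried it out, and the execution is where all the content lies. The paper spends roughly a page on this subcase: it first disposes of the situation where one of $a,b$ has its outside neighbor(s) concentrated on a single $u_i$-side (with a further three-way split on $b$'s neighbors, one branch invoking Lemma~\ref{lem case 1}); it then argues that in the remaining configuration each of $a,b$ must have exactly one neighbor on each private side; it rules out the edges $vw,xy,vz,wz,xz,yz$ one by one by exhibiting pairs satisfying Lemma~\ref{lem case 1}; and it is finally left with two explicit $3$-regular graphs (distinguished by whether $vy,wx\in E$ or $vx,wy\in E$) on which it gives a direct winning strategy or one last reduction. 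None of these steps is deep, but the case analysis is genuine and cannot be waved away; the paper itself calls this proof ``the most tedious of this section.''
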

\begin{proof} 
This scenario implies that there are \textit{exactly two} vertices, say $a$ and $b$, outside of the seven vertices comprising $\overline{N}(u_1)\cup\overline{N}(u_2)$. We have two cases, based on the potential presence of the edge $ab$.

\begin{enumerate}
\item Suppose $ab\notin E$. Start the cops at $u_1$ and $u_2$, forcing the robber to start at either $a$ or $b$. Whichever he chooses (let's say $a$), look at its neighbors: some cop must be guarding all but one (or possibly all) of those neighbors. The only way to avoid this would be to have $a$ adjacent to $u_1$'s two neighbors outside of $N(u_2)$ as well as to $u_2$'s two neighbors outside of $N(u_1)$, which would mean $\deg(a)=4$. 

\begin{figure}[h]
\caption{When $u_1,u_2$ have exactly one neighbor in common (and not each other) and $ab\notin E$.}
\centering 
\includegraphics[width=0.4\textwidth]{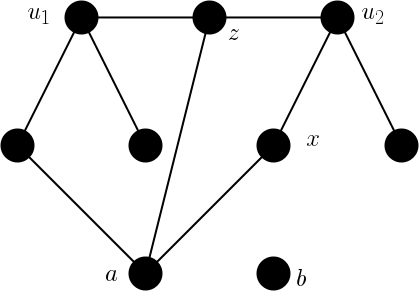}
\label{fig lemma-4-2-a}
\end{figure} 

Let's say the cop with that property is the one at $u_1$. (Note: it could be that both cops have this property, in which case we just pick one arbitrarily.) Keep him on his current vertex to effectively prevent the robber from moving. If $u_1$ guards \textit{all} of $a$'s neighbors, we send the cop at $u_2$ along any path to $a$, catching the robber. If $u_1$ does not guard $a$'s neighbor $x$, say, then we specifically send that cop along the particular path $u_2-x-a$. See Figure \ref{fig lemma-4-2-a}. 

\item Suppose $ab\in E$. Each of $\{a,b\}$ is guaranteed to have 1 or 2 other neighbors. First, suppose one of them has only 1 other neighbor, or has 2 neighbors that are both adjacent to $u_1$ (or, without loss of generality, $u_2$). Figure \ref{fig lemma-4-2-b} depicts this scenario, where 1 or 2 of the three dotted edges are present. Start the cops at $u_1$ and $u_2$, forcing the robber to start on $a$ or $b$. 

\begin{figure}[h]
\caption{When $u_1,u_2$ have exactly one neighbor in common (and not each other) and $ab\in E$ and $a$'s neighbors (besides $b$) are all adjacent to $u_1$. (Note: Only one or two of the three dotted edges are present, since $\Delta=3$.)}
\centering 
\includegraphics[width=0.4\textwidth]{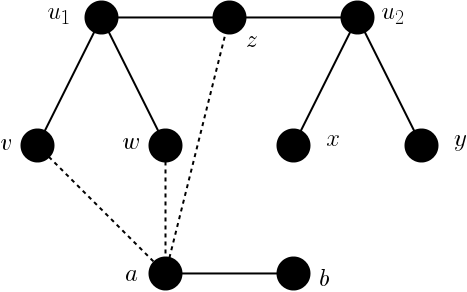}
\label{fig lemma-4-2-b}
\end{figure} 

Now, we look to $b$'s neighbor(s): 
\begin{itemize}
\item If both $bx,by\in E$, then $b$ and $u_1$ satisfy the hypotheses of Lemma \ref{lem case 1} and we're done. 
\item If only one of $\{bx,by\}$ is present, let's say $bx$, then send the cop from $u_2$ to $x$. In response, the robber must be at $a$. Then move the cop from $x$ to $b$ and the robber is trapped. 
\item If neither of $\{bx,by\}$ is present, then the neighbors of $a$ and $b$ (besides $a,b$ themselves) are all adjacent to $u_1$, so we leave the cop at $u_1$ to prevent the robber from leaving the subgraph induced by $a$ and $b$. We then move the cop at $u_2$ along any path from to $a$. The robber will be caught there or at $b$. 
\end{itemize} 

Now, the only way to avoid the above scenario is for $a$ to have exactly one neighbor in $\{v,w\}$ and exactly one neighbor in $\{x,y\}$, and the same for $b$. (If, for instance, $az\in E$, then we cannot avoid the above scenario, regardless of $a$'s other potential neighbor, since $z$ is a common neighbor to both $u_1$ and $u_2$.) Without loss of generality, let's say $av,ax\in E$. (See Figure \ref{fig lemma-4-2-c}. As explained below, $bw,by\in E$ are forced.) 

\begin{figure}[h]
\caption{The remaining scenario when $u_1,u_2$ have exactly one neighbor in common (and not each other) and $ab\in E$.}
\centering 
\includegraphics[width=0.4\textwidth]{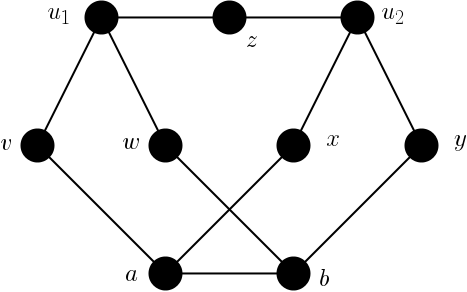}
\label{fig lemma-4-2-c}
\end{figure} 

Consider $b$'s neighbors. Suppose one of them is also a neighbor of $a$; let's say $bv\in E$. Then $v$ and $u_2$ satisfy the hypotheses of Lemma \ref{lem case 1} and we're done. So, in fact, $bw,by\in E$ are required. 

Observe that $vw\in E$ implies that $v$ and $u_2$ satisfy the hypotheses of Lemma \ref{lem case 1} and we're done; likewise for $xy\in E$ and the vertices $y$ and $u_1$. Similarly, observe that $vz\in E$ implies that $b$ and $z$ satisfy the hypotheses of Lemma \ref{lem case 1} and we're done; likewise for $wz\in E$ (vertices $a$ and $z$), $xz\in E$ (vertices $b$ and $z$), and $yz\in E$ (vertices $a$ and $z$). Thus, $z$ has no more neighbors (just $u_1,u_2$) and the only possible edges that could be added to Figure \ref{fig lemma-4-2-c} are $\{vx,vy,wx,wy\}$. 

Suppose $vx,vy\notin E$. Then two lazy cops can win by pushing the robber to $v$ as follows. Start the cops at $u_1,u_2$. The robber can start at either $a$ or $b$. Move the cop from $u_1$ to $w$. In response, the robber must end up at $a$ or $v$. In either case, send the cop from $u_2$ to $x$. In response, the robber must end up at $v$. Send the cop from $x$ to $a$ and the robber is trapped.

This argument also applies when $wx,wy\notin E$: two lazy cops can push the robber to $w$ and win. This means $v$ has another neighbor and so does $w$. Since $\Delta\leq 3$, they cannot have the same neighbor. So, we either have the edges $vx,wy\in E$ present, or else we have the edges $vy,wx\in E$ present.

Suppose $vy,wx\in E$ are the edges present. (See Figure \ref{fig lemma-4-2-d}.) Two lazy cops can win as follows. Start the cops at $u_1$ and $u_2$. 
\begin{itemize}
\item Say the robber starts at $a$. Move the cop from $u_1$ to $v$, forcing the robber to $b$. Move the cop from $u_2$ to $x$, forcing the robber to stay put. Move the cop from $v$ to $y$, trapping the robber. 
\item Say the robber starts at $b$. (This argument is symmetric to the robber starting at $a$.) Move the cop from $u_2$ to $y$, forcing the robber to $a$. Move the cop from $u_1$ to $w$, forcing the robber to stay put. Move the cop from $y$ to $v$, trapping the robber. 
\end{itemize} 

\begin{figure}[h]
\caption{When the edges $vy,wx\in E$ are present.}
\centering 
\includegraphics[width=0.4\textwidth]{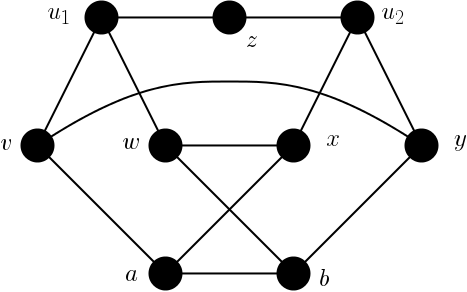}
\label{fig lemma-4-2-d}
\end{figure} 

Instead, suppose $vx,wy\in E$. (See Figure \ref{fig lemma-4-2-e}.) In this case, the vertices $w$ and $x$ satisfy the hypotheses of Lemma \ref{lem case 1} and we're done.

\begin{figure}[h]
\caption{When the edges $vx,wy\in E$ are present.}
\centering 
\includegraphics[width=0.4\textwidth]{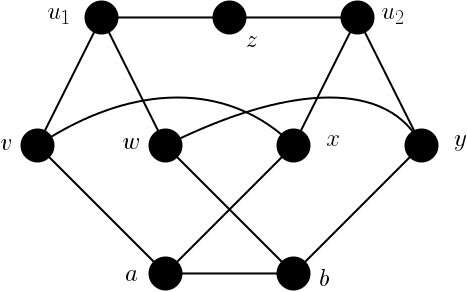}
\label{fig lemma-4-2-e}
\end{figure} 
\end{enumerate}
This completes the proof.
\end{proof}

Admittedly, this proof of Lemma \ref{lem case 2} is the most tedious of this section, but the result will be extremely useful in proving the forthcoming lemmas. Indeed, we attempted to prove some of the lemmas below without proving this result first and found them to be just as tedious as this one was. 

\begin{lem}
\label{lem case 3}
If there exist vertices $u_1,u_2$ both with degree 3 such that $\overline{N}(u_1)\cap \overline{N}(u_2)=\{u_1,u_2\}$, then $c_L(G)\leq 2$. 
\end{lem}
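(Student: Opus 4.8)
The plan is to follow the template of Lemmas~\ref{lem case 1} and~\ref{lem case 2}: open with cops at $u_1$ and $u_2$, read off where the robber must go, and then either reduce to one of those lemmas or corner the robber by hand. First I would unpack the hypothesis. Since $u_1,u_2$ are distinct and $\overline{N}(u_1)\cap\overline{N}(u_2)=\{u_1,u_2\}$, we get $u_1\sim u_2$ and $N(u_1)\cap N(u_2)=\varnothing$ (neither vertex lies in its own open neighbourhood). Write $N(u_1)=\{u_2,v,w\}$ and $N(u_2)=\{u_1,x,y\}$; then $\{v,w\}$ and $\{x,y\}$ are disjoint, $\overline{N}(u_1)\cup\overline{N}(u_2)=\{u_1,u_2,v,w,x,y\}$ consists of exactly six vertices, and there are exactly three other vertices $a,b,c$, none of which is adjacent to $u_1$ or to $u_2$. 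Starting cops on $u_1$ and $u_2$ therefore forces the robber onto one of $a,b,c$.

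Next I would dispose of the case that one of $a,b,c$, say $a$, has degree $3$. Since $a\not\sim u_1$ and $a\not\sim u_2$, we have $\overline{N}(a)\cap\overline{N}(u_1)=N(a)\cap\{v,w\}$ and $\overline{N}(a)\cap\overline{N}(u_2)=N(a)\cap\{x,y\}$, and as $\deg(a)=3$ these two sets cannot both have size $2$. So one of them is empty or a singleton, which places the pair $a,u_1$ (or $a,u_2$)---both of degree $3$---into the hypothesis of Lemma~\ref{lem case 1} or Lemma~\ref{lem case 2}, and we are done. Hence I may assume $\deg(a)=\deg(b)=\deg(c)=2$; in particular the number of edges inside $\{a,b,c\}$ is $0$, $1$, or $2$, since a triangle on $\{a,b,c\}$ would be a connected component of $G$.

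It remains to catch the robber when he sits on a degree-$2$ vertex $z\in\{a,b,c\}$, and this I would organize by where $z$'s two neighbours lie. If both lie in $\{v,w\}$ (resp.\ $\{x,y\}$), keep the cop on $u_1$ (resp.\ $u_2$) to freeze the robber and walk the other cop to $z$ along any path of $G$. If one neighbour lies in $\{v,w\}$ and the other in $\{x,y\}$, say $z\sim v$ and $z\sim x$, move the cop from $u_1$ to $v$: the robber cannot stay (he is threatened), cannot step to $v$ (occupied), and cannot step to $x$ (guarded by the cop on $u_2$), so he is caught. The only remaining possibility is that $z$ has a neighbour inside $\{a,b,c\}$, which forces at least one---hence at most two---internal edges among $a,b,c$; here I would trace the short induced paths and cycles through $u_1$ and $u_2$ and push the robber along them: moving a cop onto $z$'s external neighbour drives the robber into $\{a,b,c\}$, from which his only exits are a bounded number of vertices of $\{v,w,x,y\}$ (using $\Delta\le 3$) that the cops re-guard one at a time, or else the configuration already satisfies the hypothesis of Lemma~\ref{lem case 1} or~\ref{lem case 2} for a suitable pair of degree-$3$ vertices among $\{v,w,x,y\}$---such a pair exists because $2|E|$ equals $18$ plus the number of degree-$3$ vertices, so $|E|\ge 11$ forces at least two degree-$3$ vertices outside $\{u_1,u_2,a,b,c\}$.

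I expect this last family of cases to be the main obstacle. Once a cop leaves $u_1$ or $u_2$ to chase the robber into the $\{a,b,c\}$ region, the robber will try to slip back toward the vacated $u_i$ through $v,w,x,y$, and checking that every such escape can be re-sealed (or is already covered by an earlier lemma) needs a careful but finite enumeration of the few ways the at most two internal edges and the edges from $\{a,b,c\}$ into $\{v,w,x,y\}$ can be arranged. Everything else is routine bookkeeping of the kind already carried out in Lemmas~\ref{lem case 1} and~\ref{lem case 2}.
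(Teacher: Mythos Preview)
Your setup and the first reductions match the paper's proof exactly: unpacking the hypothesis to get $u_1\sim u_2$, $N(u_1)=\{u_2,v,w\}$, $N(u_2)=\{u_1,x,y\}$, and three leftover vertices $a,b,c$; reducing to Lemma~\ref{lem case 1} or~\ref{lem case 2} if any of $a,b,c$ has degree $3$; and handling a robber at $z\in\{a,b,c\}$ when both of $z$'s neighbours lie in $\{v,w,x,y\}$. All of this is correct and is what the paper does.

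Where you diverge is in anticipating that the remaining case (an internal edge among $\{a,b,c\}$) is the ``main obstacle,'' requiring re-guarding of exits or a search for an auxiliary degree-$3$ pair in $\{v,w,x,y\}$. That detour is unnecessary, and the degree-$3$-pair idea is not guaranteed to help anyway: two such vertices may well share two or more neighbours, landing you in the hypothesis of the present lemma or a later one rather than Lemma~\ref{lem case 1} or~\ref{lem case 2}. The paper's argument is much shorter because it organizes by the global shape of $H:=G[\{a,b,c\}]$ instead of the robber's local position. With the triangle excluded (as you noted), $H$ is either a single edge $ab$ (so $c$ falls under your external-neighbour cases and the robber is confined to the path $a-b$) or the path $a-b-c$. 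In either situation the path has exactly two exit edges into $\{v,w,x,y\}$, one from each endpoint. If both exits land in $N(u_1)$ (or both in $N(u_2)$), fix that cop to seal the path and walk the other cop through $G$ to the robber. If the exits go to different sides---say $a\sim v\in N(u_1)$ and the far endpoint is adjacent to $x\in N(u_2)$---fix the cop at $u_1$ and send the other along $u_2-x-\cdots-a$, sweeping the robber toward the sealed end. One cop never moves, so the robber has no opportunity to ``slip back toward the vacated $u_i$''; there is nothing to re-guard and no further enumeration to perform.
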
 
\begin{proof} 
This scenario implies that $u_1u_2\in E$ but those vertices share no other neighbors. Let's say $N(u_1)=\{u_2,v,w\}$ and $N(u_2)=\{u_1,x,y\}$; there are \textit{exactly three} other vertices, say $a,b,c$.

Observe that none of $\{a,b,c\}$ may have degree 3, for this leads to a scenario compared by the previous Lemmas \ref{lem case 1} and \ref{lem case 2}. If $a$ has degree 3, then it cannot be adjacent to $u_1$ or $u_2$ (which already have full degree 3), so the only way to avoid $u_1$ and $a$ satisfying the hypotheses of Lemma \ref{lem case 1} or \ref{lem case 2} is to have $av,aw\in E$ both present. This means $a$ and $u_2$ have either 0 or 1 neighbors in common, and so they satisfy the hypotheses of either Lemma \ref{lem case 1} or \ref{lem case 2}. 

Thus, $\deg(a)=\deg(b)=\deg(c)=2$. Define $\deg'$ to count the adjacencies amongst only $\{a,b,c\}$. Suppose $\deg'(a)=\deg'(b)=\deg'(c)=0$. Start the cops at $u_1$ and $u_2$, forcing the robber to start (without loss of generality) at $a$. If $N(a)\subseteq N(u_1)$, fix the cop at $u_1$ to prevent the robber from moving, and send the other cop along any path to $a$. Otherwise, $a$ has 1 neighbor in common with $u_1$ and 1 neighbor in common with $u_2$, say $x$. Again, fix the cop at $u_1$ to prevent the robber from moving, and send the other cop along the particular path $u_2-x-a$. 

Consider the induced subgraph $H:=G[\{a,b,c\}]$. From above, we deduce that $H$ has at least one edge. Also, $H$ cannot have three edges: if so, then $a,b,c$ all have full degree 2 with no possibility of edges from $H$ to $G-H$, meaning $G$ is actually disconnected. So, for now, let's say $ab$ is the only edge in $H$. (See Figure \ref{fig lemma-4-3-a}.) Start the cops at $u_1$ and $u_2$; by the argument in the previous paragraph, the robber would certainly not choose to start at $c$, so let's say he starts at $a$. We know $a$ and $b$ each have one neighbor amongst $\{v,w,x,y\}$. If those neighbors are both adjacent to (without loss of generality) $u_1$, then we fix the cop at $u_1$ to prevent the robber from leaving $\{a,b\}$ and send the other cop along any path towards $a$, trapping the robber there or at $b$. If, instead, we have something like $av,bx\in E$, we fix the cop at $u_1$ and send the other cop along the particular path $u_2-x-b-a$, trapping the robber somewhere. 

\begin{figure}[h]
\caption{Cases based on whether there are 1 or 2 edges amongst $\{a,b,c\}$.}
\centering 
\begin{subfigure}[b]{0.4\textwidth}
\includegraphics[width=\textwidth]{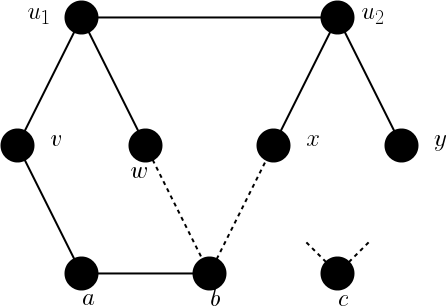}
\caption{One edge amongst $\{a,b,c\}$.}
\label{fig lemma-4-3-a}
\end{subfigure}
\hfill 
\begin{subfigure}[b]{0.4\textwidth}
\includegraphics[width=\textwidth]{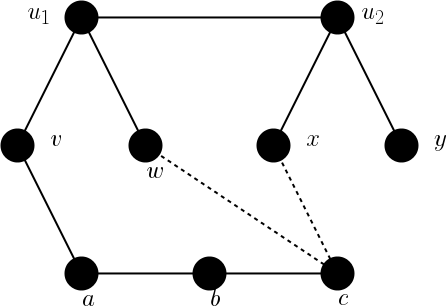}
\caption{Two edges amongst $\{a,b,c\}$.}
\label{fig lemma-4-3-b}
\end{subfigure}
\label{fig lemma-4-3}
\end{figure} 

Now, if there are two edges in $H$, let's say they're $ab,bc$. What we have is the same scenario as described in the previous paragraph, except the path $a-b$ has been subdivided into the path $a-b-c$. (See Figure \ref{fig lemma-4-3-b}.) If the neighbor of $a$ and the neighbor of $c$ are both adjacent to $u_1$, say, then we fix the cop there and send the other cop along any path to the robber. Otherwise, we fix the cop at $u_1$ and send the other cop along the particular path, say $u_2-x-c-b-a$. In any case, the robber is caught by 2 cops. 
\end{proof} 

\begin{lem}
\label{lem case 4}
If there exist vertices $u_1,u_2$ both with degree 3 such that $\overline{N}(u_1)\cap \overline{N}(u_2)=\{y,z\}$ where $y,z\neq u_1,u_2$ (i.e. $u_1$ and $u_2$ have exactly two distinct neighbors in common), then $c_L(G)\leq 2$
\end{lem}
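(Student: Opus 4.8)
The plan is to follow the template of the proof of Lemma~\ref{lem case 3}. Since $\deg(u_1)=\deg(u_2)=3$ and $\overline{N}(u_1)\cap\overline{N}(u_2)=\{y,z\}$ with $y,z\neq u_1,u_2$, we have $u_1\not\sim u_2$, and we may write $N(u_1)=\{y,z,p\}$ and $N(u_2)=\{y,z,q\}$, where $p\neq q$ (otherwise $p$ would be a third common neighbor), $p\not\sim u_2$, and $q\not\sim u_1$. Those are six named vertices; call the remaining three $a,b,c$. Two structural facts: $y$ and $z$ each already have the two neighbors $u_1,u_2$, so each has at most one further neighbor; and none of $a,b,c$ is adjacent to $u_1$ or $u_2$, so (using the standing assumption $\delta\ge 2$) each of $a,b,c$ has at least two neighbors, all lying in $\{y,z,p,q,a,b,c\}$. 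As in the earlier lemmas, start the two cops at $u_1$ and $u_2$; since each of $y,z,p,q$ is adjacent to a cop, the robber must start at one of $a,b,c$.

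There are two standard ways to finish, which I would try first. \emph{Pin-and-chase:} the cop at $u_1$ guards $\{y,z,p\}$ and the cop at $u_2$ guards $\{y,z,q\}$, so whenever the robber sits on a vertex all of whose neighbors lie inside $\{y,z,p\}$ (resp.\ inside $\{y,z,q\}$) we freeze the cop at $u_1$ (resp.\ $u_2$) to immobilize him and walk the free cop to his vertex along any path of $G$. \emph{Reduction:} if some free vertex, say $a$, has $\deg(a)=3$, then since $a\not\sim u_1,u_2$ we have $\overline{N}(a)\cap\overline{N}(u_1)=N(a)\cap\{y,z,p\}$ and $\overline{N}(a)\cap\overline{N}(u_2)=N(a)\cap\{y,z,q\}$; if either of these sets has size $\le 1$, then Lemma~\ref{lem case 1} or~\ref{lem case 2} applies, and if $a$ is adjacent to a second degree-$3$ vertex with which it shares no other neighbor, Lemma~\ref{lem case 3} applies.

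The real work is what the two modes above miss, and this is exactly where the scenario of Lemma~\ref{lem case 4} first departs from that of Lemma~\ref{lem case 3}: here a \emph{free} vertex can itself have degree $3$ and can reproduce the local picture of $u_1,u_2$ (for instance $N(a)=\{y,z,p\}$, which forces $\deg(y)=\deg(z)=3$ and makes $(a,u_1)$ a ``three common neighbors'' configuration not yet treated, or $N(a)=\{y,p,b\}$, which makes $(a,u_1)$ again a two-common-neighbor configuration), and a free vertex can have both $p$ and $q$ as its only outside neighbors, so that no single cop guards its whole neighborhood. For each such residual configuration I would enumerate the finitely many ways of completing $G$ on $\{b,c,p,q\}$ that respect $\Delta\le 3$, $\delta\ge 2$, connectivity, and $11\le |E(G)|\le 13$, and in each case exhibit a short pursuit in the style of the proof of Lemma~\ref{lem case 2}: drive the robber toward whichever of $p,q$ is momentarily unguarded, move the free cop onto the internal vertex adjacent to him, and pincer. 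The main obstacle is precisely this last enumeration: because these residual configurations are self-similar to, or later in the sequence than, the present one, they cannot be farmed out to a previous lemma and must be closed by explicit cop moves.
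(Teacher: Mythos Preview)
Your proposal is a plan rather than a proof, and the plan misses the key reduction that makes the paper's argument short. You correctly set up $N(u_1)=\{y,z,p\}$, $N(u_2)=\{y,z,q\}$, and free vertices $a,b,c$, and you correctly anticipate that the hard case is when some free vertex, say $a$, has degree~$3$. But you then only compare $a$ with $u_1$ or $u_2$, conclude that configurations like $N(a)=\{y,z,p\}$ or $N(a)=\{y,p,b\}$ are ``not yet treated,'' and resign yourself to an explicit enumeration of completions. The paper avoids this entirely by \emph{changing the comparison pair}: if $a$ is to escape Lemmas~\ref{lem case 1}--\ref{lem case 3} relative to $u_1$, it must share at least two neighbors with $u_1$, hence $ay\in E$ or $az\in E$. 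Say $ay\in E$. Then $y$ now has degree~$3$ with $N(y)=\{u_1,u_2,a\}$; since $a\not\sim u_1,u_2$, the pair $(a,y)$ consists of two adjacent degree-$3$ vertices with no other common neighbor, which is exactly Lemma~\ref{lem case 3}. This single observation kills every ``self-similar'' degree-$3$ free vertex in one stroke, so $\deg(a)=\deg(b)=\deg(c)=2$ and no enumeration is needed.

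The paper then applies the same change-of-pair trick twice more: if $y$ or $z$ picks up a third neighbor among $\{a,b,c,p,q\}$, or if $p$ (resp.\ $q$) has degree~$3$, one compares that vertex with $u_1$ or $u_2$ and lands in an earlier lemma. What survives is forced: $\deg(p)=\deg(q)=2$, $pq\notin E$ by connectivity, and $\{a,b,c\}$ must form a path joining $p$ to $q$. On that single graph the pin-and-chase you described finishes immediately. So the ``main obstacle'' you identify is not an obstacle at all once you allow yourself to pick a different pair of degree-$3$ vertices for the reduction; your proposal as written leaves the heart of the lemma undone.
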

\begin{proof}
This scenario implies that there are \textit{exactly three} vertices, say $a,b,c$, external to $\overline{N}(u_1)\cup\overline{N}(u_2)$. Let's say $N(u_1)=\{x,y,z\}$ and $N(u_2)=\{w,y,z\}$. 

Consider the induced subgraph $H:=G[\{a,b,c\}]$.  Assume that $\deg(a)=3$.  In order for $a$ and $u_1$ to avoid the scenarios covered by Lemmas \ref{lem case 1} and \ref{lem case 2} and \ref{lem case 3}, and since $au_1\notin E$, they must have at least 2 neighbors in common. Thus, either $ay\in E$ or $az\in E$. If $ay\in E$, then $a$ and $y$ satisfy the hypotheses of Lemma \ref{lem case 3}  (and similarly for $az\in E$). Thus, we may assume that $\deg(a)=\deg(b)=\deg(c)=2$. 

There cannot be any edges from $\{a,b,c,x,w\}$ to $\{y,z\}$. Suppose otherwise, say $z$ has a neighbor in $\{a,b,c,x,w\}$. Then $z$ and $u_1$ satisfy the hypotheses of Lemma \ref{lem case 2} and we're done. Now, the edge $yz$ may or may not be present. This will not affect the remainder of the argument. 

Next, consider $x$ and $w$. If $\deg(x)=3$, say, then $x$ and $u_2$ satisfy the hypotheses of Lemma \ref{lem case 1} or \ref{lem case 2}, depending on whether $xw\in E$, and we're done. So, $\deg(x)=\deg(w)=2$. Then, in order for $G$ to be connected (because of $a,b,c$), it must be that $xw\notin E$.

So, $x$ must have a neighbor amongst $\{a,b,c\}$; let's say it's $a$. Then $aw\notin E$, otherwise $G$ would not be connected (because of $b,c$). So, let's say $a$'s other neighbor is $b$. Then, for the same reason $bw\notin E$ (otherwise $c$ is disconnected). This forces the particular graph shown in Figure \ref{fig lemma-4-4} (with $yz$ indicated by a dashed line since it may or may not be present). 

\begin{figure}[h]
\caption{The only graph under consideration (modulo the potential edge $yz$). }
\centering 
\includegraphics[width=0.4\textwidth]{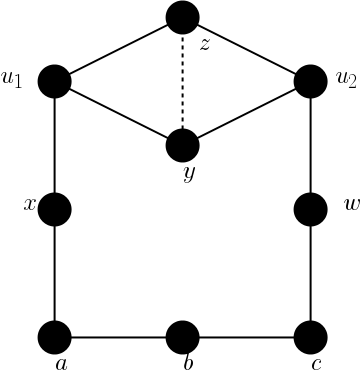}
\label{fig lemma-4-4}
\end{figure} 

Start the cops at $u_1,u_2$, forcing the robber to start amongst $\{a,b,c\}$. Wherever the robber is, fix the cop at $u_1$ and send the other cop along the particular path $u_2-w-c-b-a-x$. He will trap the robber at some point.
\end{proof}

The remaining three cases to be considered are quite short, now that we have completed a majority of the overall cases. 

\begin{lem}
\label{lem case 5}
If there exist vertices $u_1,u_2$ both with degree 3 such that $|N(u_1)\cap N(u_2)|=3$ (i.e. $u_1$ and $u_2$ have all 3 neighbors in common), then $c_L(G)\leq 2$.
\end{lem}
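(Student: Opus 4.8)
The plan is to show that the hypothesis $|N(u_1)\cap N(u_2)|=3$ is extremely restrictive: with $9$ vertices, $\Delta\le 3$, and two degree-$3$ vertices $u_1,u_2$ sharing all three of their neighbors, we have a $K_{2,3}$ sitting inside $G$ on the vertex set $\{u_1,u_2\}\cup N(u_1)$, which already accounts for $6$ of the $9$ vertices and uses up the full degree budget of $u_1$ and $u_2$. Write $N(u_1)=N(u_2)=\{x,y,z\}$ and let $a,b,c$ be the remaining three vertices. First I would observe that none of $x,y,z$ can have any further neighbor \emph{inside} $\{x,y,z\}$ together with a neighbor in $\{a,b,c\}$ without exceeding degree $3$; in fact each of $x,y,z$ already has degree $\ge 2$ (adjacent to both $u_1$ and $u_2$), so each has at most one more neighbor, and that neighbor lies in $\{x,y,z\}\cup\{a,b,c\}$.

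Next I would handle the degrees of $a,b,c$. As in the proofs of Lemmas \ref{lem case 1}--\ref{lem case 4}, I claim none of $a,b,c$ can have degree $3$: such a vertex is not adjacent to $u_1$ or $u_2$ (their degrees are full), so to avoid $a$ and $u_1$ falling under a previously handled case ($\overline N(a)\cap\overline N(u_1)$ having size $0$, $1$, or being $\{u_1,u_2\}$-like), $a$ would need at least two neighbors among $\{x,y,z\}$, say $ax,ay\in E$; but then $x$ (or $y$) and $u_1$ have at least two common neighbors ($u_2$ and $a$), so $x$ and $u_1$ satisfy the hypotheses of Lemma \ref{lem case 4} (two common neighbors) or an earlier lemma, and we are done. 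So $\deg(a)=\deg(b)=\deg(c)=2$. For connectivity, $\{a,b,c\}$ must attach to the rest of the graph through $\{x,y,z\}$; and if any vertex among $\{a,b,c\}$ is adjacent to two of $\{x,y,z\}$, say $ax,ay\in E$, then $x$ and $u_1$ again have the two common neighbors $u_2,a$, triggering Lemma \ref{lem case 4}. Hence each of $a,b,c$ has at most one neighbor in $\{x,y,z\}$, so at least one neighbor within $\{a,b,c\}$; combined with $\deg=2$ and connectivity this forces $G[\{a,b,c\}]$ to be a path, say $a-b-c$, with $b$ having no neighbor in $\{x,y,z\}$ and $a,c$ each having exactly one neighbor there (necessarily distinct, else $\deg$ fails, and by the same Lemma \ref{lem case 4} argument applied to that shared vertex). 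So up to relabeling $ax,cz\in E$, and the only possible extra edge among $\{x,y,z\}$ is one edge, which does not affect the argument.

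With the structure pinned down — $u_1,u_2$ both joined to $x,y,z$; the path $a-b-c$ with $ax,cz\in E$; and $y$ essentially pendant-ish (joined only to $u_1,u_2$ plus possibly one edge to $x$ or $z$) — the cop strategy is then routine, exactly in the spirit of Lemmas \ref{lem case 3} and \ref{lem case 4}: start the two cops on $u_1$ and $u_2$, which dominates $\{u_1,u_2,x,y,z\}$ and forces the robber to start in $\{a,b,c\}$; keep one cop at $u_1$ (this alone guards $x,y,z$, so the robber can never leave $\{a,b,c\}$), and walk the other cop from $u_2$ along $u_2-z-c-b-a-x$, sweeping the path and catching the robber. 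I would also dispose of the degenerate sub-case where $\{a,b,c\}$ carries two edges among $\{x,y,z\}$-neighbors or where the $G[\{a,b,c\}]$ path degenerates, each time either reducing to Lemma \ref{lem case 4} or running the same sweep.

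The main obstacle is not the cop strategy (which is easy once the graph is determined) but the bookkeeping that shows every configuration either reduces to one of Lemmas \ref{lem case 1}--\ref{lem case 4} or is the single sweepable graph above; the crucial repeated move is recognizing that any "extra" adjacency between $\{a,b,c\}$ and $\{x,y,z\}$ instantly creates two common neighbors of some $x\in N(u_1)$ with $u_1$, invoking Lemma \ref{lem case 4}. I expect this lemma to be short precisely because the $K_{2,3}$ core leaves almost no freedom.
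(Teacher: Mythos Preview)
Your proposal contains a basic counting error that derails the structural analysis. A $K_{2,3}$ has $5$ vertices, not $6$: the set $\{u_1,u_2\}\cup N(u_1)=\{u_1,u_2,x,y,z\}$ accounts for $5$ of the $9$ vertices, so there are \emph{four} remaining vertices $a,b,c,d$, not three. Everything you build on the path $a\text{--}b\text{--}c$ and the sweep $u_2\text{--}z\text{--}c\text{--}b\text{--}a\text{--}x$ is therefore arguing about the wrong family of graphs. There is a second slip in your reduction step: when you assert that ``$x$ (or $y$) and $u_1$ have at least two common neighbors ($u_2$ and $a$)'', note that neither $u_2$ nor $a$ lies in $N(u_1)=\{x,y,z\}$, so $u_2$ and $a$ are not common neighbors of $x$ and $u_1$ at all. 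Lemma~\ref{lem case 4} is not the right target.

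The paper's proof is a one-line reduction, and it is worth seeing why your machinery is unnecessary. Since $u_1,u_2$ have full degree in $\{x,y,z\}$, connectivity forces some edge from $\{x,y,z\}$ to $\{a,b,c,d\}$, say $ax\in E$. Now $x$ has degree $3$ with $N(x)=\{u_1,u_2,a\}$, and $x$ and $u_1$ are adjacent with $\overline N(x)\cap\overline N(u_1)=\{x,u_1\}$; this is precisely the hypothesis of Lemma~\ref{lem case 3}, so we are done immediately. No classification of $G[\{a,b,c,d\}]$, no explicit cop strategy, is needed: the whole point of the chain of lemmas in this section is that each new configuration reduces to an earlier one, and here the reduction is available after a single observation.
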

\begin{proof}
Consider the mutual neighborhood of $u_1$ and $u_2$, say $x,y,z$. There are 4 other vertices in the graph, say $a,b,c,d$. Since $u_1$ and $u_2$ cannot have any more neighbors, at least one of $\{x,y,z\}$ is adjacent to $\{a,b,c,d\}$. Let's say $ax\in E$. Then, $x$ and $u_1$ satisfy the hypotheses of Lemma \ref{lem case 3} and we're done. Otherwise, we are left with a disconnected graph.
\end{proof}

\begin{lem}
\label{lem case 6}
If there exist vertices $u_1,u_2$ both with degree 3 such that $\overline{N}(u_1)\cap \overline{N}(u_2)=\{u_1,u_2,y,z\}$ (i.e. $u_1$ and $u_2$ have two neighbors in common in addition to being adjacent), then $c_L(G)\leq 2$.
\end{lem}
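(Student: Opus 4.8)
The plan is to mirror the structure of the preceding lemmas: take the two degree-3 vertices $u_1,u_2$ with $u_1u_2\in E$ and $N(u_1)\cap N(u_2)=\{y,z\}$, so that $N(u_1)=\{u_2,y,z\}$ and $N(u_2)=\{u_1,y,z\}$. Then $\overline N(u_1)\cup\overline N(u_2)=\{u_1,u_2,y,z\}$ has only four elements, so there are \emph{exactly five} other vertices, say $a,b,c,d,e$, none of which is adjacent to $u_1$ or $u_2$. The first step is to place one cop at $u_1$ and one at $u_2$; since $y$ and $z$ are guarded by both cops, the robber is immediately confined to the five vertices $\{a,b,c,d,e\}$, and in fact can never safely step onto $y$ or $z$ either, so the whole game takes place on the induced subgraph $H:=G[\{a,b,c,d,e\}]$ together with the two ``entry'' vertices $y,z$.

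Next I would record the structural constraints forced by $2\le\delta\le\Delta\le 3$. Every vertex of $\{a,b,c,d,e,y,z\}$ has degree $\le 3$, and $y,z$ each already have the two neighbors $u_1,u_2$, so each of $y,z$ has at most one further neighbor, necessarily in $\{a,b,c,d,e\}$ (an edge $yz$ would use up $y$'s and $z$'s last slot but does not help connectivity to the $a,\dots,e$ block, and can be handled or excluded just as in Lemma~\ref{lem case 4}). Since $G$ is connected, the block $\{a,b,c,d,e\}$ must attach to the rest of the graph only through $y$ and/or $z$, giving at most two ``doorways.'' As in the proof of Lemma~\ref{lem case 4}, I would first argue that if any of $a,b,c,d,e$ has degree $3$ then, because it is not adjacent to $u_1$ or $u_2$, avoiding the hypotheses of Lemmas~\ref{lem case 1}--\ref{lem case 3} would force it to be adjacent to $y$ or $z$, and then that vertex together with $y$ (or $z$) falls under an earlier lemma --- so we may assume every vertex of $H$ has degree exactly $2$ within $G$. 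Combined with the doorway count, this pins $H$ down to a very short list of shapes: essentially a path or cycle on $\{a,b,c,d,e\}$ with one or two pendant attachments to $y,z$.

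With the vertex set confined to a five-vertex block fed by at most two doorways, the endgame is the same ``drive the robber down a path'' argument used repeatedly above. Keeping the cop at $u_1$ fixed (so the robber can never leave $H$ via $y$ or $z$), I would send the cop from $u_2$ along a carefully chosen path --- of the form $u_2 - (\text{doorway}) - (\text{chain through } a,b,c,d,e)$ --- sweeping the block; since $H$ has maximum internal degree $\le 2$ it is a disjoint union of paths and cycles, and with only one cop actively moving, a single sweep along such a structure corners the robber (for the cycle-plus-doorway case, the fixed cop at $u_1$ plus the guard at $y,z$ closes the cycle exactly as in the $H$-is-a-$4$-cycle arguments of Section~\ref{sec large degree}). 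In each remaining configuration I would either exhibit this explicit two-cop capture or point out that some pair of vertices satisfies the hypotheses of one of Lemmas~\ref{lem case 1}--\ref{lem case 5}, reducing to a solved case.

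The main obstacle I anticipate is bookkeeping rather than depth: ensuring the case split on how the five-vertex block $H$ looks (path $P_5$, cycle $C_5$, $C_4$ plus a pendant, $C_3$ plus a $P_2$, $P_4$ plus an isolated-ish attachment, etc.) and on which of $y,z$ serve as doorways is genuinely exhaustive, and that every branch either reduces to an earlier lemma or admits the sweep strategy --- in particular handling the $C_5$ case and the ``two separate doorways'' case without letting the robber oscillate. I expect this to be comparable in length to Lemma~\ref{lem case 4} and strictly shorter than Lemma~\ref{lem case 2}, since the rigidity coming from only four vertices in $\overline N(u_1)\cup\overline N(u_2)$ does most of the work.
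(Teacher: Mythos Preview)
Your setup (five external vertices $a,\dots,e$, two common neighbours $y,z$) is faithful to the hypothesis as printed, but the paper's own proof of this lemma does not match that hypothesis: it works with labels $x,y,z$ and only \emph{four} external vertices $a,b,c,d$, so it is actually proving the case $\overline N(u_1)\cap\overline N(u_2)=\{u_1,u_2,z\}$ (adjacent with \emph{one} common neighbour). The printed statement is evidently a typo; the scenario you are analysing is what the paper handles separately as Lemma~\ref{lem case 7}.

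For that five-external-vertex scenario, the paper's argument is a one-line reduction rather than a case analysis: connectedness forces an edge from $\{a,\dots,e\}$ to $\{y,z\}$, say $az\in E$; then $\deg(z)=3$, and $z$ and $u_1$ are adjacent with exactly one common neighbour (namely $u_2$), which is precisely the hypothesis of the \emph{intended} Lemma~\ref{lem case 6}. Your direct sweep approach would ultimately succeed --- once you enforce $\deg(a)=\dots=\deg(e)=2$ and rule out $yz\in E$ by connectivity, a parity/connectivity count on doorways forces $H$ to be exactly $P_5$ with one end attached to $y$ and the other to $z$, so several of the shapes on your list ($C_5$, $C_4$ with a pendant, $C_3+P_2$) are in fact impossible --- but the paper's trick of promoting the doorway vertex $z$ to a new degree-$3$ partner for $u_1$ bypasses all of that bookkeeping. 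That reduction to the ``adjacent plus one common neighbour'' configuration is the key step your proposal does not take.
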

\begin{proof}
Consider the vertices not adjacent to either $u_1$ or $u_2$ and call them $a,b,c,d$. Now, if $\deg(a)=3$, then we cannot avoid $a$ and $u_1$ satisfying the hypotheses of Lemma \ref{lem case 1} or \ref{lem case 2} or \ref{lem case 4} since they have at most two neighbors in common. Thus, $\deg(a)=\deg(b)=\deg(c)=\deg(d)=2$. 

Next, we must have either $\deg(x)=3$ or $\deg(y)=3$ (or possibly both). This is because we currently have only two vertices of degree 3; if this remains, then $G$ has only 10 edges, in which case the argument in Section \ref{sec 10 edges} applies. So, there must be two vertices of degree 3 amongst the candidates $\{x,y,z\}$ (since, overall in $G$, the number of vertices of degree 3 is even). Let's say $\deg(x)=3$. To avoid having $x$ and $u_1$ satisfying the hypotheses of any of the previous Lemmas in this section, we must have $xz\in E$. (Indeed, this makes $x$ and $u_1$ satisfy the hypotheses of this very lemma we're proving.) 

\begin{figure}[h]
\caption{$x$ and $u_2$ must satisfy a previous lemma}
\centering 
\includegraphics[width=0.4\textwidth]{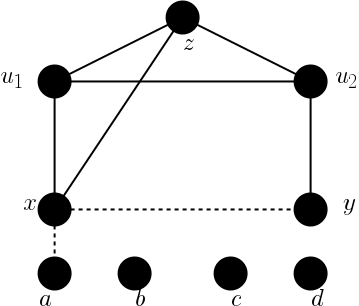}
\label{fig lemma-4-6}
\end{figure} 

Now, $x$ has one other neighbor. If $xy\in E$ then $x$ and $u_2$ satisfy the hypotheses of Lemma \ref{lem case 4}. Otherwise, $x$ and $u_2$ satisfy the hypotheses of Lemma \ref{lem case 2}. See Figure \ref{fig lemma-4-6}
\end{proof}

\begin{lem}
\label{lem case 7}
If there exist vertices $u_1,u_2$ both with degree 3 such that $\overline{N}(u_1)\cap \overline{N}(u_2)=\{u_1,u_2,y,z\}$ (i.e. $u_1$ and $u_2$ have two neighbors in common in addition to being adjacent), then $c_L(G)\leq 2$.
\end{lem}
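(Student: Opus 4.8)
The plan is to reduce the present configuration, in which $u_1,u_2$ are adjacent and share two common neighbors, to the one-common-neighbor scenario already resolved in Lemma \ref{lem case 6}, so that no fresh cop strategy is needed. First I would pin down the forced local structure. Since $u_1\sim u_2$, both have degree exactly $3$, and they share the two common neighbors $y,z$, their neighborhoods are completely determined: $N(u_1)=\{u_2,y,z\}$ and $N(u_2)=\{u_1,y,z\}$. Consequently $\overline{N}(u_1)=\overline{N}(u_2)=\{u_1,u_2,y,z\}$, so this ``core'' consists of only four vertices, leaving exactly five external vertices, none of which is adjacent to $u_1$ or $u_2$.

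Next I would use connectivity to force a degree-$3$ common neighbor. Because $u_1,u_2$ are already saturated, the only edges that can leave the core toward the five external vertices must emanate from $y$ or $z$; and since $\Delta\le 3$ while each of $y,z$ is already adjacent to both $u_1$ and $u_2$, each has at most one further incident edge. If neither of those free edges reached an external vertex---that is, if $yz\in E$ or if $y,z$ both had degree $2$---then the core would have no edge to the external side, contradicting connectivity, since the five external vertices would form a separate component. I therefore conclude $yz\notin E$ and that at least one of $y,z$ has degree $3$ with an external third neighbor; without loss of generality, say $\deg(y)=3$ with external neighbor $p$.

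Finally I would exhibit the pair matching the earlier lemma. Consider $y$ and $u_1$: both have degree $3$, they are adjacent, and their only common neighbor is $u_2$, since $N(y)=\{u_1,u_2,p\}$ meets $N(u_1)=\{u_2,y,z\}$ only in $u_2$ (the external vertex $p$ lies outside $\overline{N}(u_1)$, and $z$ is not adjacent to $y$). Thus $y$ and $u_1$ are two adjacent degree-$3$ vertices sharing exactly one neighbor, which is precisely the situation settled in Lemma \ref{lem case 6}; hence $c_L(G)\le 2$.

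The hard part is not any cop chase but the bookkeeping: one must verify that the core is exactly $\{u_1,u_2,y,z\}$ and run the short connectivity argument that guarantees a degree-$3$ common neighbor. Once such a neighbor is located the reduction is immediate. I would be careful to invoke Lemma \ref{lem case 6} via its structural description, namely adjacent degree-$3$ vertices with a single shared neighbor, so that the citation is unambiguous regardless of how that lemma's hypothesis is phrased.
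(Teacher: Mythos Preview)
Your proof is correct and follows essentially the same route as the paper: identify the four-vertex core $\{u_1,u_2,y,z\}$, use connectivity (together with $u_1,u_2$ being saturated and $\Delta\le 3$) to force one of $y,z$ to acquire an external third neighbor, and then observe that this common neighbor together with $u_1$ forms an adjacent degree-$3$ pair sharing exactly one neighbor, reducing to Lemma~\ref{lem case 6}. The paper's version is terser---it simply picks an edge $az$ witnessing connectivity and notes that $z$ and $u_1$ satisfy Lemma~\ref{lem case 6}---whereas you spell out the $yz\notin E$ step explicitly, but the argument is the same.
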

\begin{proof}
This scenario implies that there are five vertices outside the collective neighborhood $\{u_1,u_2,y,z\}$; call these $\{a,b,c,d,e\}$. To ensure $G$ is connected, there must be some edge between those two sets. Since $u_1,u_2$ cannot have more neighbors, let's say $az\in E$. Now, $z$ and $u_1$ have each other and exactly one more neighbor in common, so they satisfy the hypotheses of Lemma \ref{lem case 6} and we're done. 
\end{proof}

This completes the case-by-case analysis for graphs on 9 vertices with $11\leq |E|\leq 13$. Overall, we have actually proven the main result, as explained in the next section. 

\section{Summary and future work}
\label{sec summary}
\subsection{Main result}
Here, we restate \textbf{Theorem \ref{thm R3}} and explain how it has been proven: 
\begin{thm*}[\ref{thm R3}]
The $3\times 3$ Rooks graph $G=K_3\square K_3$ is the unique graph on 9 vertices with $c_L(G)=3$. All other graphs $H$ on 9 vertices have $c_L(H)\leq 2$.
\end{thm*}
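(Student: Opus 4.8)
The plan is to reduce to a finite case analysis organised by the maximum degree $\Delta$ of a $9$-vertex graph $G$, after first clearing away the trivial cases. By Theorem~\ref{thm Rn} (with $n=3$) we already know $c_L(R_3)=\gamma(R_3)=3$, so $R_3$ does occur; all of the work is in showing it is the \emph{only} $9$-vertex graph with $c_L\geq 3$, i.e.\ that two lazy cops win on every other connected $9$-vertex graph $H$. By Theorem~\ref{thm 8 vertices} every connected graph on at most $8$ vertices has $c_L\leq 2$, and by Lemma~\ref{lem delta 2} any $9$-vertex graph with a vertex of degree $1$ has the same lazy cop number as a strictly smaller graph (hence $c_L\leq 2$ for such a graph). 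So throughout I may assume $G$ is connected on exactly $9$ vertices with $\delta(G)\geq 2$, and it suffices to show every such $G$ other than $R_3$ has $c_L(G)\leq 2$.

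Next I would split on $\Delta(G)$. If $\Delta\geq 5$ then $\Delta\geq n-4$ and Lemma~\ref{lem Delta n-4} gives $c_L(G)\leq 2$ at once. If $\Delta=4$, fix a vertex $u$ of degree $4$, station one cop on $u$ permanently (confining the robber to $H:=G[V-\overline{N}(u)]$, which has exactly $4$ vertices), and analyse $H$ as in Lemma~\ref{lem v8}: if some vertex of $H$ has degree $0$ or $3$ within $H$ the second cop wins quickly, so $H$ is $P_4$, $2K_2$, or $C_4$, and in the first two cases the second cop simply chases the robber down. The delicate subcase is $H\cong C_4$ with every vertex of the $4$-cycle having two neighbours in $N(u)$; there I would invoke the dedicated lemma of Section~\ref{sec D4 R3}, whose Claims~1--3 constrain the adjacencies among the four neighbours of $u$ tightly enough that, up to isomorphism, either two lazy cops win or $G$ is forced to equal $R_3$.

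If instead $\Delta\leq 3$, then since $\delta\geq 2$ (and $9$ is odd, so $G$ cannot be $3$-regular) we have $2=\delta\leq\Delta=3$ and $9\leq|E|\leq 13$. For $|E|\leq 10$, $G$ is either $C_9$ (with $c_L=2$), or two cycles joined by a path, or three internally disjoint $u$--$v$ paths, each handled directly in Section~\ref{sec 10 edges}. For $11\leq|E|\leq 13$, a degree/parity count forces at least two vertices $u_1,u_2$ of degree $3$; I would then case on the size of $\overline{N}(u_1)\cap\overline{N}(u_2)$ (equivalently, on how many neighbours $u_1$ and $u_2$ share and whether $u_1\sim u_2$) and apply Lemmas~\ref{lem case 1}--\ref{lem case 7}, each of which either exhibits a two-cop win or produces a new pair of degree-$3$ vertices realising an earlier, already-settled configuration. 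Since these lemmas exhaust every possibility for a pair of degree-$3$ vertices, every $9$-vertex graph with $\Delta\leq 3$ satisfies $c_L\leq 2$.

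Assembling the three regimes, the only $9$-vertex graph not shown to satisfy $c_L\leq 2$ is $R_3$, which arises (exactly once) in the $\Delta=4$ analysis, and $c_L(R_3)=3$ by Theorem~\ref{thm Rn}; hence $R_3$ is the unique $9$-vertex graph with lazy cop number $3$, proving Theorem~\ref{thm R3}. I expect the main obstacle to be the low-degree regime: the $\Delta=3$, $11\leq|E|\leq 13$ analysis is a long chain of ad hoc cop-movement arguments (Lemma~\ref{lem case 2} being the worst offender), and the $C_4$ subcase of $\Delta=4$ needs a careful argument that the structural constraints pin down \emph{exactly} $R_3$ rather than some nearby graph. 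Finding a single structural principle that makes the low-degree case short, instead of the exhaustive check, is the part I would not expect to get for free.
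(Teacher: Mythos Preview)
Your proposal is correct and follows essentially the same approach as the paper: reduce to $9$ vertices with $\delta\geq 2$ via Theorem~\ref{thm 8 vertices} and Lemma~\ref{lem delta 2}, then split on $\Delta$, handling $\Delta\geq 5$ by Lemma~\ref{lem Delta n-4}, isolating $R_3$ in the $\Delta=4$ analysis via the $C_4$ subcase lemma of Section~\ref{sec D4 R3}, and finishing $\Delta\leq 3$ with the edge-count split of Section~\ref{sec 10 edges} together with the exhaustive Lemmas~\ref{lem case 1}--\ref{lem case 7}. Your assessment of where the difficulty lies (Lemma~\ref{lem case 2} and the $C_4$ rigidity argument) matches the paper's own remarks.
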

In Section \ref{sec large degree}, we showed that any graph on $|V|\leq 8$ vertices has $c_L\leq 2$. Then, we considered graphs on 9 vertices with maximum degree $\Delta\geq 4$. We found that the only way to avoid creating a graph on which two lazy cops can win was to construct the particular graph $R_3$. 

It remained to show that all graphs on 9 vertices with $\Delta\leq 3$ also have $c_L\leq 2$. By Lemma \ref{lem delta 2}, we needed only to consider graphs that also have $\delta\geq 2$, which narrowed our search to $8\leq |E|\leq 13$. For $|E|=8,9,10$, we could analyze these graphs easily. For $11\leq |E|\leq 13$, we looked to vertices of degree three (of which there must be at least four, in fact) and considered case-by-case scenarios based on their common neighbors. In every case, we could either win with two lazy cops or reduce the scenario to a previous one. 

By virtue of Theorem \ref{thm Rn}, we also know $c_L(R_3)=3$. Overall, this has shown the main result. The only graph on 9 vertices satisfying $c_L=3$ is that particular graph, and any smaller graph has $c_L\leq 2$. 

\subsection{Conjectures and partial progress}
\begin{conj}
\label{conj Rn}
The unique smallest graph for which $c_L=n$ is $R_n=K_n\square K_n$.
\end{conj}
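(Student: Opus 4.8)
The plan is to argue by strong induction on $n$, the base case $n=3$ being \textbf{Theorem~\ref{thm R3}}. Fix $n\geq 4$, assume the conjecture for all smaller values, and let $G$ be, if one exists, a connected graph on at most $n^2$ vertices with $c_L(G)\geq n$ and $G\not\cong R_n$ having the fewest possible vertices; we aim for a contradiction. (That $c_L(R_n)=n$ is \textbf{Theorem~\ref{thm Rn}}, so such a contradiction establishes the conjecture.) Two reductions trim $G$. First, $\delta(G)\geq 2$: a degree-$1$ vertex can be deleted without changing $c_L$ by (the argument of) \textbf{Lemma~\ref{lem delta 2}}, and the result is a strictly smaller counterexample since $R_n$ is $(2n-2)$-regular. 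Second, and more useful, $G$ is \emph{corner-free}, i.e.\ no vertex $u$ has $\overline{N}(u)\subseteq\overline{N}(v)$ for some $v\neq u$: such a $u$ is a pitfall, $G$ retracts onto $G-u$, and the Berarducci--Intrigila retract arguments (adapted to lazy cops exactly as in \textbf{Lemma~\ref{lem delta 2}}) give $c_L(G)=c_L(G-u)$; since $R_n$ is corner-free, $G-u$ is again a smaller counterexample. So from now on $\delta(G)\geq 2$ and, in particular, every degree-$2$ vertex of $G$ has non-adjacent neighbors.

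The regime of large maximum degree is dispatched by a confinement argument generalizing \textbf{Lemma~\ref{lem Delta n-4}}. Let $\deg(u)=\Delta(G)$ and station one cop permanently at $u$; a robber who ever enters $\overline{N}(u)$ is caught on the next turn, so he is confined for the entire game to one component $C$ of $H:=G[V-\overline{N}(u)]$. The remaining $n-2$ cops then win provided $c_L(C)\leq n-2$ — and they are in fact freer than in the lazy game on $C$ alone, since they may transit through $G-H$. Now $|V(H)|=|V(G)|-1-\Delta$. If $\Delta\geq 2n-1$ then $|V(H)|\leq(n-1)^2-1$, so every component of $H$ is too small to be $R_{n-1}$ and the inductive hypothesis gives $c_L(C)\leq n-2$; hence $c_L(G)\leq n-1$, a contradiction. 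The same conclusion holds when $\Delta=2n-2$, \emph{unless} $|V(G)|=n^2$ and the external subgraph $H$ is a single copy of $R_{n-1}$. So two cases remain: \textbf{(a)} $\Delta=2n-2$, $|V(G)|=n^2$, and every vertex of degree $2n-2$ has external subgraph $\cong R_{n-1}$; and \textbf{(b)} $\Delta\leq 2n-3$.

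Case (a) is a rigidity argument in the spirit of the $\Delta=4$ lemma of Section~\ref{sec D4 R3}, and it is where $R_n$ itself must be identified. Fixing such a $u$, its external subgraph $H\cong R_{n-1}$ inherits an $(n-1)\times(n-1)$ grid structure, and the $2n-2$ neighbors of $u$ (which, as $u$ is already saturated, are $u$'s only neighbors) must attach to $H$ and to each other subject to the degree cap $2n-2$, connectivity, and the analogues of Claims~1--3 of that lemma (no two vertices of $H$ in the same ``line'' may share an external neighbor, adjacent external vertices cannot share a neighbor in $H$, and so on). A partition/counting argument along those lines should force the unique $R_n$ attachment pattern, every other pattern yielding an $(n-1)$-cop win by driving the robber into a configuration where all of his neighbors are guarded; carrying this out for general $n$ looks laborious but routine, and it contradicts $G\not\cong R_n$.

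Case (b) — bounded maximum degree — is the crux, and I do not see how to finish it without a genuinely new idea. The $n=3$ proof handled its sparse regime ($\Delta\leq 3$) by an exhaustive, computer-assisted enumeration of the finitely many graphs, which plainly does not scale, and the ``few escape routes'' heuristic does not by itself yield a strategy. What one needs is an assertion of the form: \emph{if $G$ is corner-free with $\delta(G)\geq 2$, at most $n^2$ vertices, and $\Delta(G)\leq 2n-3$, then $c_L(G)\leq n-1$} — ideally as a consequence of a cleaner statement about sparse graphs of arbitrary order. Plausible tools include a lazy-cops analogue of Aigner \& Fromme's isometric-path-guarding lemma (the obstruction being that one lazy cop cannot simultaneously guard a geodesic and chase), a discharging or edge-count argument exploiting $|E(G)|\leq n^2(2n-3)/2$ together with corner-freeness, or an iterated structural reduction continuing the degree-$1$ and corner reductions above. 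Closing this gap is, in our view, exactly what stands between the present $n=3$ result and the conjecture, and it is the same difficulty flagged at the end of Section~\ref{sec cases}.
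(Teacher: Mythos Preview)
The statement you are attempting is \textbf{Conjecture~\ref{conj Rn}}, and the paper does \emph{not} prove it: the authors state it as open, note that this article establishes only the case $n=3$ (Theorem~\ref{thm R3}), and then record partial progress and further sub-conjectures in Section~\ref{sec summary}. So there is no ``paper's own proof'' to compare against; the relevant comparison is between your outline and the authors' discussion of what would be needed.

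On that comparison, your proposal is an honest proof \emph{program}, not a proof, and you say so yourself. Your large-degree reduction (stationing a cop at a vertex of degree $\Delta$ and invoking the inductive hypothesis on the external subgraph) is exactly the mechanism behind the paper's Corollary~\ref{cor delta n-9} and the $\Delta\geq 5$ bullet in the paper's $n=4$ sketch; your minimal-counterexample bookkeeping to rule out $c_L(C)\geq n-1$ on small components is sound. The two remaining cases, however, are both genuine gaps:
\begin{itemize}
\item \textbf{Case~(a).} You assert that when $\Delta=2n-2$, $|V|=n^2$, and the external subgraph is $R_{n-1}$, a rigidity argument ``in the spirit of'' Section~\ref{sec D4 R3} forces $G\cong R_n$, calling it ``laborious but routine.'' That is optimistic. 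The $n=3$ version already required three ad~hoc claims and a final case split, and the analogues of Claims~1--3 for general $n$ are neither stated nor obvious; in particular, the combinatorics of how $2n-2$ external vertices can attach to an $R_{n-1}$ while respecting the degree cap grows rapidly with $n$. This step is not established.
\item \textbf{Case~(b).} You explicitly flag $\Delta\leq 2n-3$ as unresolved, and you are right to. The paper's own handling of the sparse regime for $n=3$ (Section~\ref{sec cases}) was a seven-lemma exhaustive case analysis that the authors themselves describe as unsatisfying and non-scalable; they pose exactly the missing ingredient you identify as an open problem. Your suggested tools (a lazy isometric-path lemma, discharging, iterated reductions) are reasonable directions but none is known to work.
\end{itemize}
In short: there is nothing to correct, because neither you nor the paper claims a proof. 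Your outline is consistent with, and slightly sharper than, the paper's own Section~\ref{sec summary} discussion, but the conjecture remains open.
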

This article has proven the $n=3$ case, and the $n=1,2$ cases are obvious. We also know, from Theorem \ref{thm Rn}, that $c_L(R_n)=n$. So, this conjecture seems reasonable.

Before concluding, we share some ideas that may make some progress towards proving this conjecture for the $n=4$ case, at least.  We start with a corollary to Theorem \ref{thm 8 vertices} that generalizes Lemma \ref{lem Delta n-4}: 
\begin{cor}
\label{cor delta n-9}
For a graph $G$ on $n$ vertices with $\Delta\geq n-9$, we have $c_L(G)\leq 3$.
\end{cor}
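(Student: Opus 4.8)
The plan is to reuse the idea from Lemma~\ref{lem Delta n-4}, but with Theorem~\ref{thm 8 vertices} doing the heavy lifting in place of the short ad hoc analysis there. First I would pick a vertex $u$ with $\deg(u)=\Delta\geq n-9$ and permanently station one of the three lazy cops at $u$. Since $u$ is adjacent to every vertex of $N(u)$, the robber is caught the moment he sits on or steps into $\overline{N}(u)$, so he is forced to play entirely within the induced subgraph $H:=G[V-\overline{N}(u)]$. The point of the hypothesis is the count $|\overline{N}(u)|=\Delta+1\geq n-8$, which forces $|V(H)|\leq 8$.

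Next I would observe that the robber is in fact confined to a single connected component $C$ of $H$: any edge of $G$ joining two vertices of $V(H)$ is already an edge of $H$, so the robber cannot jump between components of $H$, and his only other neighbors in $G$ lie in $N(u)$, which the stationed cop makes unreachable. Now $C$ is a connected graph on at most $8$ vertices, so Theorem~\ref{thm 8 vertices} gives $c_L(C)\leq 2$.

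The two remaining cops then finish the job: because $G$ is connected, they first walk to the pair of starting vertices prescribed by a two-lazy-cop winning strategy on $C$, which takes finitely many moves and during which the robber stays trapped in $C$; then they execute that strategy inside $C$ while the first cop never leaves $u$. This catches the robber in finitely many moves, so $c_L(G)\leq 3$. Degenerate cases are harmless: if $H$ is empty then $u$ alone dominates $G$ and $c_L(G)=1$, and a component consisting of a single vertex is still covered by Theorem~\ref{thm 8 vertices}.

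The only point needing a word of care — the mild ``obstacle'' — is that the two free cops may have to spend several turns reaching their designated vertices in $C$, during which the robber moves around $C$ freely; but this is harmless, since he cannot escape $C$, and the two-cop strategy on $C$ works no matter which vertex of $C$ the robber occupies when the cops finally get into position.
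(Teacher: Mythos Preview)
Your proof is correct and follows essentially the same approach as the paper's: station one cop at a high-degree vertex $u$, confine the robber to the induced subgraph $H=G[V-\overline{N}(u)]$ of order at most $8$, and invoke Theorem~\ref{thm 8 vertices} to let the remaining two lazy cops catch him there. You are in fact a bit more careful than the paper's terse argument, explicitly handling the possibility that $H$ is disconnected (by restricting to the robber's component $C$) and explaining how the two free cops first walk through $G$ to their prescribed starting vertices before executing the two-cop strategy.
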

\begin{proof}
Consider a vertex $u$ with $\deg(u)\geq n-9$ and all of its neighbors, and consider removing these from the graph. Let $H$ be the induced subgraph on what remains. This graph $H$ has at most 8 vertices, so Theorem \ref{thm 8 vertices} guarantees that 2 lazy cops can win on that subgraph. In the overall graph $G$, start a cop at $u$ and keep him there throughout the game. Play the winning strategy on $H$ with the other two cops. 
\end{proof}
The following conjecture naturally generalizes these ideas. Its veracity would follow inductively, were Conjecture \ref{conj Rn} proven true, using an argument like the one in the previous proof. Conversely, we wonder whether Conjecture \ref{conj Rn} is logically equivalent to this one.
\begin{conj}
For a graph $G$ on $n$ vertices with $\Delta\geq n-k^2$, we have $c_L(G)\leq k$.
\end{conj}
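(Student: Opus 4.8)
The plan is to prove the conjectured bound by induction on $k$, using $R_n = K_n\square K_n$ as the extremal "worst case" in exactly the way Lemma \ref{lem Delta n-4} and Corollary \ref{cor delta n-9} are used for $k=2,3$. The base cases $k=1$ (Nowakowski--Winkler: $\Delta \geq n-1$ means $u$ dominates $G$, so $c_L = 1$) and $k=2$ (Lemma \ref{lem Delta n-4}) are already in hand, and $k=3$ is Corollary \ref{cor delta n-9}. For the inductive step, I would take a vertex $u$ with $\deg(u) = \Delta \geq n - k^2$, station one lazy cop permanently at $u$, and let $H := G[V - \overline{N}(u)]$ be the subgraph the robber is thereby confined to. Then $|V(H)| \leq n - 1 - (n - k^2) = k^2 - 1 = (k-1)^2 + 2(k-1)$. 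The remaining $k-1$ cops must win on $H$; if one could show that every graph on at most $k^2 - 1$ vertices has lazy cop number at most $k-1$, the induction would close immediately, since $k^2 - 1 < (k+1-1)^2 = k^2$ would... wait, that is not quite the right inequality --- one needs "$\leq (k-1)$-many cops suffice on $\leq k^2-1$ vertices", which is precisely the statement that the smallest graph with $c_L = k$ has $> k^2 - 1$ vertices, i.e. at least $k^2$ vertices. So the conjecture is in fact equivalent to the clean statement that \emph{any graph on fewer than $k^2$ vertices has $c_L \leq k-1$}, and Conjecture \ref{conj Rn} (which says $R_k$ on $k^2$ vertices is the unique smallest with $c_L = k$) would give it with room to spare.

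Concretely, the induction I would run is: \emph{assume every connected graph on at most $(k-1)^2$ vertices has $c_L \leq k-1$ (this is the induction hypothesis at level $k-1$, wrapped up from Conjecture \ref{conj Rn} at that level or proven directly); then any graph on $n$ vertices with $\Delta \geq n - k^2$ has $c_L \leq k$.} The subgraph $H$ has at most $k^2 - 1$ vertices, which unfortunately exceeds $(k-1)^2$, so the induction hypothesis as stated does not directly apply to $H$ --- this gap is exactly the main obstacle (see below). If instead one assumes the full strength "every graph on $< k^2$ vertices has $c_L \leq k-1$," then $H$, having $\leq k^2 - 1 < k^2$ vertices, satisfies $c_L(H) \leq k-1$, the $k-1$ free cops win on $H$, and together with the cop pinned at $u$ we get $c_L(G) \leq k$; moreover this shows the smallest $c_L = k$ graph has $\geq k^2$ vertices, completing the inductive bookkeeping. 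So the cleanest route is to state and prove the equivalent form "$|V| < k^2 \implies c_L \leq k-1$" directly by strong induction, with the retract/degree reductions (Lemma \ref{lem delta 2}, Lemma \ref{lem c cl gamma}) and the pin-a-cop-at-a-high-degree-vertex move doing all the work.

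The hard part will be the gap between $k^2 - 1$ (the size of $H$) and $(k-1)^2$ (what a naive induction on the $k$-dependent degree condition controls): pinning a single cop at a max-degree vertex only removes $\Delta + 1 \geq n - k^2 + 1$ vertices, leaving up to $k^2 - 1$, whereas to feed the induction one would want to leave at most $(k-1)^2$. Closing this requires either (a) exploiting structure in $H$ --- if $\Delta(G)$ is much larger than $n - k^2$, or if $H$ itself has a high-degree vertex, one can pin a second cop and recurse further, reducing to the genuinely tight case where $H$ is close to $R_{k-1}$-like; or (b) proving the stronger uniform statement "$< k^2$ vertices $\implies c_L \leq k-1$" outright, which is essentially Conjecture \ref{conj Rn} minus the uniqueness claim and is presumably just as hard. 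A plausible intermediate target is the $k=4$ case: there $H$ has at most $15$ vertices, and one would need "$\leq 15$ vertices $\implies c_L \leq 3$," i.e. the analogue of Theorem \ref{thm R3} pushed from $9$ up to $15$ vertices --- itself a substantial case analysis, but one where the large-$\Delta$ regime is handled by Corollary \ref{cor delta n-9} and only the bounded-degree graphs on $10$ through $15$ vertices need the kind of exhaustive (or computer-assisted) treatment given in Sections \ref{sec 10 edges}--\ref{sec cases}. I would not expect a short uniform proof; the honest plan is the induction above, reducing the conjecture to its own lower-dimensional instances, with the base-case verification being the real labor.
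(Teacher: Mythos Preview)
The statement is a \emph{conjecture} in the paper, not a theorem; the paper offers no proof. What the paper does say is exactly what you have rediscovered: the bound would follow inductively from Conjecture~\ref{conj Rn} via the pin-a-cop-at-a-high-degree-vertex argument (the same mechanism as in Corollary~\ref{cor delta n-9}), and the authors explicitly remark that they \emph{wonder} whether the two conjectures are logically equivalent. So your reduction --- station a cop at $u$ with $\deg(u)\geq n-k^2$, confine the robber to $H=G[V\setminus\overline N(u)]$ on at most $k^2-1$ vertices, and invoke ``fewer than $k^2$ vertices $\Rightarrow c_L\leq k-1$'' --- is precisely the intended mechanism, and you correctly flag that this last implication (essentially Conjecture~\ref{conj Rn} without the uniqueness clause) is the real content, not something the induction supplies for free. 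Your identification of the gap between $k^2-1$ and $(k-1)^2$, and your assessment that the $k=4$ case amounts to pushing Theorem~\ref{thm R3} up to $15$ vertices, match the paper's own discussion in Section~\ref{sec summary}.

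One correction: you assert the degree conjecture is \emph{equivalent} to ``$|V|<k^2 \Rightarrow c_L\leq k-1$''. The backward implication is the reduction above, but the forward one is not established. Applying the degree conjecture at level $k-1$ to a graph on $m\leq k^2-1$ vertices would require $\Delta\geq m-(k-1)^2$, which can be as large as $2k-2$ and is not automatic; and applying it at level $k$ only yields $c_L\leq k$, not $k-1$. The paper is accordingly careful to say only that it wonders about equivalence, and you should be too.
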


We can, at the very least, use Corollary \ref{cor delta n-9} to narrow our search for graphs with $c_L\geq 4$, in much the same way that we used Lemma \ref{lem Delta n-4} to narrow our search for graphs with $c_L\geq 3$. Suppose we have a graph with $|V|\leq 15$: 
\begin{itemize}
\item If $\Delta\geq 6$, then Corollary \ref{cor delta n-9} applies.
\item If $\Delta=5$, then take a vertex $u$ with $\deg(u)=5$ and look at the subgraph $H$ induced by the vertices external to $\overline{N}(u)$; $H$ has at most nine vertices, so either it has $c_L\leq 2$ or else it is precisely the Rooks graph $R_3$. If $c_L(H)\leq 2$, just station a cop at $u$ and let two others play on $H$ to win. If $H$ is precisely $R_3$, then to ensure $G$ is connected, there must be an edge from some vertex in $H$, say $a$, to one of $u$'s neighbors, say, $b$. But, to then ensure $\Delta=5$, and knowing $R_3$ is 4-regular, that vertex in $R_3$ \textit{cannot have any more neighbors}. So, start a cop at $u$ and start two cops on the copy of $R_3$ to force the robber to start at $a$. Send the cop from $u$ to $b$ and the robber is both threatened and trapped. 
\item If $\Delta=4$, then take a vertex with $\deg(u)=4$ and look at the subgraph $H$ induced by the vertices external to $\overline{N}(u)$; $H$ has at most ten vertices. If $c_L(H)\leq 2$, then we're good. If $c_L(H)=3$, though \ldots this is where things get interesting. At this point, we hope for a characterization of those graphs with 10 vertices that require 3 lazy cops. Based on Beveridge et al. and the result contained in the present article, we believe that the only such graphs are the Petersen Graph and a copy of $R_3$ with an additional vertex (subject to some conditions presently unknown). We postpone any further progress on this case and, in the meantime, are seeking to prove this conjecture, which would be quite helpful in this step: 
\begin{conj}
The only graphs $G$ on 10 vertices with  $c_L(G)=3$ are 
\begin{enumerate}
\item the Petersen Graph, or 
\item a copy of $R_3$ with an additional vertex $v$ whose degree is at most 5, and whose adjacencies are chosen in a particular way so as not to inadvertently reduce the lazy cop number to two. 
\end{enumerate}
\end{conj}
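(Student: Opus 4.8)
The plan is to run the same degree‑stratified argument that settled the $9$‑vertex case, now with $n=10$, treating ``$R_3$ plus a vertex'' as the new exceptional family alongside the Petersen graph. Fix a connected $G$ on $10$ vertices. By Lemma~\ref{lem delta 2} it suffices to treat $\delta(G)\ge 2$ (a graph with a pendant vertex has the same $c_L$ as the $9$‑vertex graph obtained by deleting it, and so lands in family~(2) precisely when that $9$‑vertex graph is $R_3$); and by Lemma~\ref{lem Delta n-4}, since $\Delta\ge n-4=6$ forces $c_L\le 2$, we may assume $\Delta(G)\le 5$. Also, by Corollary~\ref{cor delta n-9} every connected $10$‑vertex graph already has $c_L\le 3$, so the entire problem is to decide \emph{which} of them fail to satisfy $c_L\le 2$. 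I would then split on $\Delta\in\{5,4,3\}$.

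For $\Delta=5$: pick $u$ with $\deg u=5$, station one cop at $u$ permanently so the robber is confined to $H:=G[V-\overline{N}(u)]$, which has exactly $4$ vertices. Exactly as in the proofs of Lemma~\ref{lem v8} and the lemma of Section~\ref{sec D4 R3}, if $H$ has a vertex of degree $0$ or $3$ within $H$, or if $H\cong P_4$ or $H\cong 2K_2$, then two lazy cops win; the only surviving possibility is $H\cong C_4$. I would rerun the four‑cycle argument of Section~\ref{sec D4 R3} essentially verbatim --- its Claims 1--3 still force disjoint edges among the external neighbours and a rigid incidence pattern between $H$ and $N(u)$, the only new wrinkle being that a vertex of $H$ may now have up to three neighbours in $N(u)$ --- and check that every configuration except one admits a $2$‑cop win, the exception being $G[N(u)\cup V(H)]\cong R_3$, i.e.\ $G$ is $R_3$ with $u$ adjoined as a degree‑$5$ vertex. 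For $\Delta=4$: pick $u$ with $\deg u=4$ and consider $G-u$, a graph on $9$ vertices. If $G-u$ is disconnected, every component has at most $8$ vertices, so Theorem~\ref{thm 8 vertices} together with a cop parked at $u$ (which confines the robber to one component) gives $c_L(G)\le 2$. If $G-u$ is connected, Theorem~\ref{thm R3} yields the dichotomy: either $c_L(G-u)=3$, whence $G-u\cong R_3$ and $G$ is $R_3$ plus the degree‑$\le 4$ vertex $u$; or $c_L(G-u)\le 2$, and in that subcase one must still produce a genuine $2$‑cop win on $G$ itself (the three‑cop ``park‑a‑cop‑at‑$u$'' argument only reproves $c_L\le 3$), by a Section~\ref{sec D4 R3}‑style analysis of how $H:=G[V-\overline{N}(u)]$, now of size $5$, attaches to $N(u)$; the residual bad configurations turn out to be exactly the $R_3$‑plus‑vertex graphs.

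For $\Delta=3$: we have $2\le\delta\le\Delta\le 3$ and hence $10\le|E|\le 15$, and I would mimic Section~\ref{sec cases}, taking two vertices $u_1,u_2$ of degree $3$ and casing on $|\overline{N}(u_1)\cap\overline{N}(u_2)|$; each case should either produce a $2$‑cop win or reduce to an already‑handled one, with the single irreducible configuration being the Petersen graph. It then remains to verify the two families really do have $c_L=3$: for the Petersen graph $3=c(\mathrm{Pet})\le c_L(\mathrm{Pet})\le\gamma(\mathrm{Pet})=3$; for $R_3$ itself $c_L(R_3)=3$ by Theorem~\ref{thm Rn}; and for $R_3+v$ one must pin down the phrase ``adjacencies chosen in a particular way'' by an explicit list of the neighbourhood patterns of $v$ that collapse $c_L$ to $2$ (roughly, those for which $v$, together with one other cop, can dominate or corner the robber --- e.g.\ $v$ adjacent to an entire row or column of $R_3$, or to a suitable $5$‑set), checking $c_L=3$ for all the others.

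The hard part will be the $\Delta=3$ case (and, to a lesser degree, the residual subcase of $\Delta=4$). One cannot shortcut it by invoking Beveridge et al.'s classification of $10$‑vertex graphs with ordinary cop number $3$, because --- as $R_3$ itself shows, with $c=2$ but $c_L=3$ --- small ordinary cop number is compatible with lazy cop number $3$, and there is no known theorem forcing $c=c_L$ when $\Delta\le 3$; so the conjecture genuinely requires ruling out \emph{every} $10$‑vertex graph with $\Delta\le 3$ other than the Petersen graph, that is, showing $c(G)\le 2\Rightarrow c_L(G)\le 2$ among such graphs. There are on the order of hundreds of these, and, as the authors already remark for the $9$‑vertex analogue, the winning strategies are multi‑move and lack a unifying principle, so a short conceptual proof seems out of reach --- a {\tt SageMath}‑assisted enumeration (as for $9$ vertices) followed by uniform, by‑hand verification of a $2$‑cop strategy on each graph is the realistic route. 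A secondary obstacle is simply making item~(2) precise: one must characterize the subsets $S\subseteq V(R_3)$ with $|S|\le 5$ for which adjoining a vertex with neighbourhood $S$ leaves the lazy cop number equal to $3$, i.e.\ for which no pair of lazy cops can exploit a cop sitting on the new vertex to shrink the robber's safe territory on $R_3$ below what he needs to evade a single cop there.
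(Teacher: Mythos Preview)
The statement you are addressing is a \emph{conjecture} in the paper, not a theorem; the authors explicitly list it in the ``Conjectures and partial progress'' subsection and do not claim a proof. There is therefore no ``paper's own proof'' to compare your attempt against. What you have written is not a proof either, and to your credit you largely present it as such: a degree-stratified research plan that parallels the paper's $9$-vertex argument, with honest acknowledgement that the $\Delta=3$ case (and the residual $\Delta=4$ subcase) would require an enumeration whose by-hand verification you have not carried out.

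That said, a few of the steps you sketch as routine are not. In the $\Delta=5$ case, the four-cycle analysis of Section~\ref{sec D4 R3} relied crucially on $\Delta=4$: every vertex of the $4$-cycle $H$ had \emph{exactly} two external neighbours, which is what drove Claims~1--3 and the partition structure. With $\Delta=5$ a vertex of $H$ may have three external neighbours, so Claim~1 (disjoint external neighbourhoods for opposite vertices) and the counting that forces exactly two disjoint edges among $N(u)$ both break; you cannot simply ``rerun verbatim.'' In the $\Delta=4$ case, your switch between $G-u$ and $H=G[V-\overline{N}(u)]$ conflates two different objects; knowing $c_L(G-u)\le 2$ gives no direct control over $c_L(G)$, since the robber may pass through $u$, and your proposed $5$-vertex $H$ analysis is unwritten. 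Finally, item~(2) of the conjecture is itself imprecise (``adjacencies chosen in a particular way''), and part of resolving the conjecture is to determine that list --- you correctly identify this as a secondary obstacle, but it means the target statement is not yet well-posed enough to be ``proved'' in the usual sense.
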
 
\item If $\Delta=3$, then we feel strangely stuck \ldots  We believe this shouldn't be so challenging for the cops because of how few edges there could be. However, this seems to make it inordinately difficult to find concise, general arguments. A better understanding of graphs with $\Delta=3$, in general, would be extremely helpful. 
\end{itemize} 
We welcome suggestions and results relevant to any of these conjectures.

\bibliography{mybibfile-R3} 
\end{document}